\def\boxit#1{\vbox{\hrule height1pt\hbox{\vrule width1pt\kern3pt
  \vbox{\kern3pt#1\kern3pt}\kern3pt\vrule width1pt}\hrule height1pt}}
\newtheoremstyle{custom}% name
  {3pt}%      Space above
  {3pt}%      Space below
  {\slshape}%         Body font
  {}%         Indent amount (empty = no indent, \parindent = para indent)
  {\bfseries}% Thm head font
  {.}%        Punctuation after thm head
  { }%     Space after thm head: " " = normal interword space;
   {}%         Thm head spec (can be left empty, meaning `normal')
\theoremstyle{custom}
\newtheorem{theorem}{Theorem}[subsection]
\newtheorem{proposition}[theorem]{Proposition}
\newtheorem{proposition/definition}[theorem]{Proposition/Definition}
\newtheorem{corollary}[theorem]{Corollary}
\newtheorem{conjecture}[theorem]{Conjecture}
\theoremstyle{definition}
\newtheorem{definition}[theorem]{Definition}
\newtheorem{example}[theorem]{Example}
\newtheorem{question}[theorem]{Question}
\theoremstyle{remark}
\newtheorem{remark}[theorem]{Remark}
\def\donote#1{\noindent{\bf #1\ }}% for when nothing else works
\newtheoremstyle{exercise}% name
  {3pt}%      Space above
  {6pt}%      Space below
  {}%         Body font
  {}%         Indent amount (empty = no indent, \parindent = para indent)
  {\bfseries}% Thm head font
  {:}%        Punctuation after thm head
  { }%     Space after thm head: " " = normal interword space;
   {}%         Thm head spec (can be left empty, meaning `normal')
\theoremstyle{exercise}
\newtheorem{exercise}[theorem]{Exercise}
\newtheoremstyle{exercises}% name
  {3pt}%      Space above
  {6pt}%      Space below
  {}%         Body font
  {}%         Indent amount (empty = no indent, \parindent = para indent)
  {\bfseries}% Thm head font
  {:}%        Punctuation after thm head
  {\newline}%     Space after thm head: " " = normal interword space;
   {}%         Thm head spec (can be left empty, meaning `normal')
\theoremstyle{exercise}
\newtheorem{exercises}[theorem]{Exercises}
\def\boxit#1{\vbox{\hrule height1pt\hbox{\vrule width1pt\kern3pt
  \vbox{\kern3pt#1\kern3pt}\kern3pt\vrule width1pt}\hrule height1pt}}
\def\BC{\mathbb C}\def\BF{\mathbb F}\def\BS{\mathbb S}
\def\BP{\mathbb P}
\def\pp#1{\mathbb P^{#1}}
\def\fb{\mathfrak b}
\def\pp#1{{\mathbb P}^{#1}}
\def\tdim{{\rm dim}}
\def\hd{,...,}
\def\ww{\wedge}
\def\upperp{{}^\perp}
\def\cS{{\mathcal S}}
\def\cW{{\mathcal W}}
\def\cO{{\mathcal O}}
\def\11{\mathbf 1}
\def\fg{{\mathfrak g}}
\def\fp{{\mathfrak p}}
\def\a{\alpha}
\def\o{\omega}
\def\s{\sigma}
\def\ot{{\mathord{ \otimes } }}
\def\op{{\mathord{\,\oplus }\,}}
\def\otc{{\mathord{\otimes\cdots\otimes}\;}}
\def\ra{{\mathord{\;\rightarrow\;}}}
\def\La#1{\Lambda^{#1}}
\def\ccdots{\circ\cdots\circ}
\def\op{\oplus}
\def\BF{\Bbb F}\def\BZ{\Bbb Z}
\def\ep{\epsilon}
\def\op{\oplus}
\def\s{\sigma}
\def\a{\alpha}
\def\FS{\mathfrak  S}
\def\ol{\overline}
\def\BP{\mathbb  P}
\def\BC{\mathbb  C}
\def\pp#1{\mathbb  P^{#1}}
\def\BS{\mathbb  S}
\def\ep{\epsilon}
\def\fp{\mathfrak  p}
\def\fg{\mathfrak  g}
\def\frp#1#2{\frac{\partial {#1}}{\partial {#2}}}
\def\hd{, \hdots ,}
\def\La#1{\Lambda^{#1}}
\def\pp#1{\mathbb  P^{#1}}
\def\ra{\rightarrow}
\def\tdeg{\operatorname{deg}}
\def\tdet{\operatorname{det}}\def\tpfaff{\operatorname{Pf}}
\def\sPf{\operatorname{sPf}}
\def\tperm{\operatorname{perm}}
\def\tdim{\operatorname{dim}}
\def\tlim{\lim}
\def\tmin{\operatorname{min}}
\def\upperp{{}^{\perp}}
\def\ww{\wedge}
\def\ctimes{\times \cdots\times}
\def\be{\begin{equation}}
\def\ene{\end{equation}}
\def\aaa{{\bold {a}}}
\def\sgn{{\rm{sgn}}}\def\tsgn{{\rm{sgn}}}
\def\frp#1{\frac{\partial}{\partial{#1}}}
\def\dual{{^\vee}}
\def\vp{{\bold V\bold P}}
\def\vnp{{\bold V\bold N\bold P}}\def\p{{\bold P}}
\def\np{{\bold N\bold P}}
\def\G{\Gamma}
\newcommand{\sign}{\operatorname{sgn}}
\def\tzeros{{\rm Zeros}}
\def\BC{\mathbb C}\def\BF{\mathbb F}\def\BS{\mathbb S}
\def\BP{\mathbb P}
\def\pp#1{\mathbb P^{#1}}
\def\fb{\mathfrak b}
\def\pp#1{{\mathbb P}^{#1}}
\def\tdim{\rm dim}
\def\hd{,...,}
\def\ww{\wedge}
\def\upperp{{}^\perp}
\def\be{\begin{equation}}
\def\ene{\end{equation}}
\def\aaa{{\bold a}}
\def\tmin{\operatorname{min}}
\def\G{\Gamma}
\def\cS{{\mathcal S}}
\def\cW{{\mathcal W}}
\def\cO{{\mathcal O}}
\def\11{\mathbf 1}
\def\BZ{\mathbb Z}
\def\FS{{\mathfrak S}}
\def\fg{{\mathfrak g}}
\def\fp{{\mathfrak p}}
\def\a{\alpha}
\def\o{\omega}
\def\s{\sigma}
\def\ep{\epsilon}
\def\ot{{\mathord{\,\otimes }\,}}
\def\op{{\mathord{\,\oplus }\,}}
\def\otc{{\mathord{\otimes\cdots\otimes}\;}}\def\ctimes{{\mathord{\times\cdots\times}\;}}
\def\ra{{\mathord{\;\rightarrow\;}}}
\def\tdim{{\rm dim}}
\def\tzeros{{\rm Zeros}}
\def\tdet{{\rm det}}\def\tlim{{\rm lim}\;}
\def\tperm{{\rm perm}}
\def\La#1{\Lambda^{#1}}
\def\vp{\bold {VP}}\def\vnp{\bold {VNP}}
\numberwithin{equation}{subsection}
 \numberwithin{theorem}{subsection}
\def\ol{\overline}
\def\p{{\bold P}}
\begin{document}
\title{$P$ versus $NP$ and geometry}
\author{J.M. Landsberg}
%\date{April 2010}
\begin{abstract} In this primarily expository article, I describe   geometric approaches  to variants of $P$ v.  $NP$,
present several results that illustrate the role of group actions in complexity theory,  and make a first
step towards
   geometric definitions of complexity classes. My goal is to help bring geometry and complexity theory
closer together.
\end{abstract}
 \thanks{supported by NSF grant  DMS-0805782}
\email{jml@math.tamu.edu}
\maketitle

\section{Introduction}

The purpose of this article is to explain  some of the beautiful problems   in
geometry  that arise in the study of $\p$ versus $\np$ and to motivate geometers
to work on them.
The article is divided into three
parts: (i)   {\it holographic algorithms}, where surprising reductions in complexity 
are related to the geometry of complex Hermitian symmetric spaces, in particular the variety
of pure spinors, (ii)    comparing the complexity of computing
the permanent and determinant polynomials, where local differential geometry,
geometric invariant theory
and representation theory all play a role, and (iii) first steps towards describing   geometric 
(i.e., coordinate free)
definitions of algebraic complexity classes. 
While these parts are formally unrelated, there are common themes arising
in each case, most importantly,   (possibly hidden) group actions.

 \medskip

Roughly speaking,   
a {\it problem} in complexity theory is a class of expressions to be evaluated  (e.g. count the number of four colorings of
a planar graph). An {\it instance} of a problem is a particular member
of the class (e.g. count the number of four colorings of the complete
graph with four vertices). $\p$ is the class of problems that admit
an algorithm that solves any instance of it in a number of steps
that depends polynomialy on the size of the input data. One says
that such problems \lq\lq admit a polynomial time solution\rq\rq .
$\np$ is the class of problems where a proposed solution to an instance
can be positively checked in polynomial time. The famous
Cook's hypothesis is $\p\neq\np$. 

\medskip

I will be concerned with two types of evaluations in this article,
here is the first:
For each $n$, let $V_n$ be a complex vector space   and 
  assume $\tdim (V_n)$ grows exponentially with  $n$. It is known that
the pairing 
\begin{align*}V_n\times V_n^*&\ra \BC\\
(v,\a)&\mapsto \langle \a,v\rangle
\end{align*}
 of the vector space with
its dual requires on the order of $\tdim (V_n)$ arithmetic operations
to perform. However if $V_n$ has additional structure and 
$\a,v$ are in \lq\lq special position\rq\rq\ with respect to
this structure, the pairing can be evaluated faster. A trivial
example would be if $V_n$ were equipped with a basis and $v$ 
was restricted to be a linear combination of only the first
few basis vectors. I will be concerned with  more subtle examples  
such as the following: let $V_n=\La k\BC^n$, then
inside $V_n$ are the decomposable vectors (the cone
over the Grassmannian $G(k,\BC^n)$). If $\a,v$ are decomposable, 
 equation \eqref{avpair} shows that the pairing $\langle \a,v\rangle$ can be
evaluated  in   polynomial time in $n$.
From a geometric perspective, this is one of the key
ingredients to L. Valiant's {\it holographic algorithms}
discussed in \S\ref{holoalgsecta} and \S\ref{holoalgsectb}.  For $n$ large, the codimension of the Grassmannian
is huge, so it would seem highly unlikely that any interesting
problem could have $\a,v$ so special. However small Grassmannians
are of small codimension. This leads to the second key ingredient
to  holographic algorithms.  On the geometric side, if
$[v_1]\in G(k_1,W_1)$ and $[v_2]\in G(k_2,W_2)$, then
$[v_1\ot v_2]\in G(k_1k_2,W_1\ot W_2)$. Thus if our
vectors can be thought of as being built out of 
vectors in smaller spaces, there is a much better
chance of the vectors lying in the Grassmannian. Due to the nature of 
 problems in complexity theory, this is
exactly what occurs. The third key ingredient
is that there is  some flexibility in how the small vector spaces
are equipped with the additional structure, and 
I show (Theorem \ref{preholothm}) that even for $\np$-complete problems there
is sufficient flexibility to allow everything to work up
to this point. The difficulty occurs when one tries to
glue together the small vector spaces compatibly for
both $V_n$ and $V_n^*$, although even here,   the \lq\lq only\rq\rq\  problem
that can occur is one of signs, see \S\ref{goeswrongsect}.   

\medskip

The second type of evaluation I will be concerned with
is that of sequences of (homogeneous) polynomials,
$p_n\in S^{d(n)}\BC^{v(n)}$, where the degree $d(n)$
and the number of variables $v(n)$ are required to grow
at most polynomialy with $n$. A generic such sequence
is known to require an exponential (in $n$) number of
arithmetic operations to evaluate and we
are interested in characterizing the sequences
where the evaluation can be done quickly. Again there are
sequences such as
$p_n=x_1^{d(n)}+\cdots + x_{v(n)}^{d(n)}$ where it
is trivial to see that there is a polynomial time evaluation, but
there are other, more subtle examples, such as
$\tdet_n\in S^n\BC^{n^2}$ where the fast evaluation 
occurs thanks to a group action (Gaussian elimination, see \S\ref{gausselimsect}).

From a geometer's perspective, it is more interesting to look
at the zero sets of the polynomials, to get sequences
of hypersurfaces in projective spaces. Similar to the
situation above regarding signs, if one changes the
signs in the expression of the determinant, e.g., to
all plus signs to obtain the permanant, one arrives
at a $\vnp$-hard sequence, where $\vnp$ is Valiant's algebraic
analogue of $\np$, see \S\ref{vpdefssect} for a definition.

\smallskip

\donote{Problem:} Determine geometric properties of sequences
of hypersurfaces such that their defining equations
admit  polynomial time  evaluations.

\smallskip

A very tentative step towards  resolving this problem is taken in \S\ref{geomccdefsect}. A second
problem is:

\smallskip

\donote{Problem:} Determine geometric properties of sequences
of hypersurfaces such that their defining equations
are in the class $\vnp$.

\smallskip

A first observation is that if a polynomial is
easy to evaluate, then any specialization of
it is also easy to evaluate, or in other words
the polynomial associated to any linear section
of its zero set is also easy to evaluate. This
leads to Valiant's conjecture \cite{MR564634} that the
permanent sequence $(\tperm_m)$ cannot be realized as a linear
projection of the determinant sequence $(\tdet_{n(m)})$ unless
$n$ grows faster than any polynomial (Conjecture \ref{valpermconj}). The best results on
this conjecture so far are due to T. Mignon and N. Ressayre
\cite{MR2126826} who use local differential geometry. While
the local differential geometry of the $det_n$-hypersurface
is essentially understood (see Theorem \ref{detdiffinvars}),
a major
difficulty in continuing their program is to distinguish the
local differential geometry of the $\tperm_m$-hypersurface
from that of  a generic hypersurface. Furthermore, the
determinant hypersurface is so special it may be difficult to isolate
exactly which of its properties are the key to it having a fast evaluation.
Suggestions for overcoming this second difficulty are given in \S\ref{beyondgctsect}.

\smallskip

From the geometric point of view, 
a significant esthetic improvement towards approaching Valiant's conjecture
is the {\it Geometric complexity theory} (GCT) program proposed by K. Mulmuley
and M. Sohoni in \cite{MS1,MS2}. Instead of regarding the determinant itself, one considers
its $GL_{n^2}$-orbit closure in $\BP (S^n\BC^{n^2})$ and similarly for
the permanent. The problem becomes one to compare two
algebraic varieties that are invariant under a group action. In \S\ref{gctdessect}
I briefly review the  program, summarizing from  \cite{BLMW}.
Even   with the GCT program, one still begins with the determinant and permanent,
and it might be useful to consider other sequences as well, as discussed in  \S\ref{beyondgctsect}.

The examples up to this point indicate that sequences in $\vp$ that are not  in $\vp_e$,
the sequences of polynomials having \lq\lq small\rq\rq\ expressions, see \S\ref{vpdefssect} for a precise
definition,
(and analogously for $\p$) should have some kind of symmetry, but that symmetry could be hidden. 
It would be very useful to be able to formalize the notion of \lq\lq hidden symmetry\rq\rq\ in this context.
Similarly, it would be useful to have
coordinate free  definitions of complexity classes.

\subsection*{Overview}  In \S\ref{holoalgsecta},  I describe how to convert a counting problem
to a vector space pairing.
In \S\ref{grasspinsect},  I describe how the \lq\lq big cell\rq\rq\  in the Grassmannian (resp. the
spinor variety) admits an  interpretation  as the set of vectors of minors (resp. sub-Pfaffians) in preparation for \S\ref{holoalgsectb},
where I review the reformulation of holographic algorithms of \cite{LMNholo} and point out a consequence that all problems
in $\np$ are \lq\lq nearly\rq\rq\ holographic (Theorem \ref{preholothm}). In \S\ref{cominpairsect}  
the results of \S\ref{grasspinsect} are generalized  to all cominuscule varieties.
In \S\ref{vpdefssect} I review the definitions
of Valiant's complexity classes in preparation for sections \ref{valsect}, \ref{MSsect} and  \ref{geomccdefsect}.
\S\ref{valsect} discusses  Valiant's conjecture regarding the permanent as a projection
of the determinant. There are two new results (Theorems \ref{detdiffinvars} and \ref{detprojs}) on the local differential geometry of the
hypersurface $\{ \tdet_n=0\}$ relevant for complexity.      
The {\it Geometric Complexity Theory}  program of Mulmuley and Sohoni is very briefly reviewed in  \S\ref{MSsect}.  In \S\ref{geomccdefsect} a coordinate free
definition of 
the class $\ol{\vp}_e$ is given,   where {\it joins} and {\it multiplicative joins} play a role, the latter perhaps
being defined here for the first time, and a first step is taken towards a geometric definition 
of  $\vp$, using
the idea of possibly hidden symmetries.
 Other than as noted above, the various sections can be read independently.

 The results presented in this paper 
are preliminary - the main purpose of the paper is to indicate some of the deep and beautiful connections between the 
$\p$ v.  $\np$ problem and geometry. 
For connections with other areas of mathematics, see, e.g.  
\cite{MR2334207}.

 I use the summation convention that repeated indices
appearing up and down are to be summed over their range.

\subsection*{Acknowledgments} I thank MEGA for inviting me to
give a lecture on this topic in June 2009.  This paper follows up on joint work with
P. B\"urgisser, L. Manivel and J. Weyman on the GCT program,   work  with
J. Morton and S. Norine on holographic algorithms, and reports on current work
with L. Manivel and D. The. It is a pleasure to thank
these collaborators  as well as S. Kumar, L. Valiant and J. Cai for helping
me understand the computer science literature and many useful discussions.
The AIM workshop
{\it Geometry and representation theory of tensors for computer science, statistics and other areas}   July 21-25, 2008,
was especially useful as a starting point for these conversations and I 
  gratefully thank AIM and the other participants of the workshop.
Finally I thank the anonymous referees for their useful suggestions.

\section{Holographic algorithms I: Counting problems as vector space pairings $A^*\times A\ra \BC$}\label{holoalgsecta}

For simplicity of exposition, I restrict to the complexity problem 
of counting the number of solutions
to equations $c_s$ over $\BF_2$ with variables $x_i$.  
This problem is called $\# SAT$ in the complexity literature.
(In complexity theory one usually deals with Boolean variables and clauses, which is
essentially equivalent to equations over $\BF_2$ but some care must be taken in the translation.)

It came as a shock to the complexity community
when L. Valiant \cite{Valiantaccident} showed  that a certain restricted counting problem
(affectionately called \lq\lq \#Pl-Rtw-Mon-3CNF\rq\rq\ in the complexity literature), where
  counting the number of solutions mod $2$ is already  $\#\p$ complete,
had the property that counting the number of solutions mod $7$ could be done
in polynomial time. J. Cai \cite{MR2277247} recognized Valiant's method could be formed in terms
of pairings of tensors in dual spaces, and the discussion below follows his formulation.
See \S\ref{holohis} below for more on the history and further references.

To convert a counting problem to a vector space pairing,  proceed as follows:

\smallskip

\donote{Step 1.}  To an instance of a problem construct a bipartite graph $\Gamma=(V_x,V_c,E)$ that encodes the problem. Here $V_x,V_c$ are the two sets of vertices
and $E$ is the set of edges. $V_x$ corresponds to the set of variables, $V_c$ to the set of equations, and
there is an edge $e_{is}$ joining the vertex of the  variable $x_i$ to the vertex of the equation $c_s$ iff
$x_i$ appears in $c_s$. 

\medskip

\donote{Step 2.}  Construct \lq\lq local\rq\rq\ tensors that encode the information at each vertex. To do this first
associate to each edge $e_{is}$ a vector space $A_{is}=\BC^2$ with basis $a_{is|0}, a_{ is|1}$ and dual basis
$\a_{is|0}, \a_{is|1}$ of $A_{is}^*$. Next, to each variable $x_i$ associate the vector space
$$
A_i:=\bigotimes_{\{s\mid e_{is}\in E\}}A_{is}
$$
and the tensor 
\be\label{gitensor}g_i:=\ot_{\{s\mid e_{is}\in E\}} a_{is|0}+\ot_{\{s\mid e_{is}\in E\}} a_{is|1}\in A_i
\ene
 which will encode that $x_i$ should be consistently 
assigned either $0$ or $1$ each time it appears.
Now to each equation $c_s$ we associate a tensor in $A_s^*:=\ot_{\{i\mid e_{is}\in E\}} A_{is}^*$ that encodes that $c_s$ is satisfied.
For example, say $x_i,x_j,x_k$ appear in $c_s$ and that
$$
c_s(x_i,x_j,x_k)=x_ix_j+x_ix_k+x_jx_k+ x_i+x_j+x_k+1
$$
which is satisfied over $\BF_2$ as long as the variables $x_i,x_j,x_k$ are not all $0$ or all $1$.
(This equation is   called {\it 3NAE} in the computer science literature.)
More generally, say $c_s$ has $x_{i_1}\hd x_{i_{d_s}}$ appearing and $c_s$ is $d_sNAE$, then
one associates the tensor
\be\label{sisnae}
r_s:= \sum_{(\ep_1\hd \ep_{d_s})\neq (0\hd 0),(1\hd 1)}
\a_{i_1,{s }|\ep_1}\otc \a_{i_{d_s},{s}|\ep_{d_s}}.
\ene

\medskip

\donote{Step 3.}  Tensor all the local tensors from $V_x$ (resp. $V_c$) together to get two tensors in dual vector spaces
with the property that their pairing counts the number of solutions. That is,
consider $G:=\ot_ig_i$ and $R:=\ot_sr_s$ respectively elements of the vector spaces $A:=\ot_{e}A_{e}$ and $A^*:=\ot_{e}A^*_{e}$.
Then, the pairing $\langle G,R\rangle$ counts the number of solutions.

\begin{remark}
Up until now I could have just taken each $A_{is}=\BZ_2$. The reason for complex numbers was to allow a larger group
action. This group action will destroy the local structure but leave the global structure unchanged. Valiant's inspiration
for doing this was quantum mechanics, where particles are replaced by wave functions.
\end{remark}

So far we have replaced our original counting problem with the problem of computing a pairing
$A\times A^*\ra \BC$ where the dimension of $A$ is exponential in the size of the input data. 
If we had arbitrary vectors, then there is no way to perform this pairing in a number of steps that
is polynomial in the size of the original data. We saw that if one is   lucky, 
the pairing can be computed quickly. In the next section I describe the geometry
underlying \lq\lq getting lucky\rq\rq\ and in the following section discussion how  to make local changes
of bases that simultaneously put  each $g_i$ and $r_s$ into spinor varieties.

\section{Detour: Grassmannians and Spinor varieties}\label{grasspinsect}

Mathematicians are used to viewing the Grassmannian as the variety parametrizing linear
subspaces of a vector space, and the spinor variety as parametrizing isotropic subspaces.
However in statistics, the 
\lq\lq big cell\rq\rq\ inside arises as the space parametrizing the set of
minors of matrices (resp. Pfaffians of skew-symmetric matrices). We show how
these second descriptions lead to the fast algorithms mentioned in the introduction.

\subsection{The Grassmannian as a variety parametrizing minors
of matrices}

Let $W$ be a vector space and let $G(k,W)$ denote
the Grassmannian of $k$-planes through the origin in $W$.
Assume WLOG that $k\leq \tdim W-k$. The
Pl\"ucker embedding $G(k,W)\subset \BP (\La k W)$ is
obtained by, given a $k$-plane $E$,  taking a basis $e_1\hd e_k$ of $E$ and
sending $E$ to  the point $[e_1\ww\cdots\ww e_k]\in \BP (\La k W)$.
The cone over the Grassmannian,   $\hat G(k,W)\subset \La k W$ is thus the set of
 $v\in \La k W$, such that there exist
$w_1\hd w_k\in W$ with  $v=w_1\ww\cdots \ww w_k$.
 
\medskip

The Grassmannian
$G(k,W)$ admits a local parametrization as follows:
Write $W=E\op F$ where $\tdim E=k$.
Let $1\leq i,j\leq k$, $ 1\leq s,t\leq n-k$, 
fix  bases $e_1\hd e_k$ of $E$ with dual basis $e^1\hd e^k$
of $E^*$, and 
$f_1\hd f_{n-k}$ of $F$ with dual basis $f^1\hd f^{n-k}$.
Say $E=[v_0]$,  $v_0\in\hat G(k,W)$ and we want to locally parametrize
$G(k,W)$ around $[v_0]$.  Choose our basis such that
$e_j=w_j$ in the description of $v$ above. Let $x^s_j$
be linear coordinates on $E^*\ot F\simeq T_{[v_0]}G(k,W)$.
The  local parametrization about $E=[v(0)]$ is   
$$
[v(x^s_i)]=[(e_1+x^s_1e_s)\ww\cdots \ww (e_k+x^s_ke_s)].
$$
In what follows I will also need to work with $G(k,W^*)$,
In our dual bases,   a local parametrization about $E^*=
\langle e^1\hd e^k\rangle=[\a(0)]$ is
$$
[\a(y^s_j)]= [(e^{ 1}+ y^{ 1}_se^s)\ww\cdots \ww (e^{k}+ y^{k}_se^s)].
$$

I next explain how to interpret   the open subset of
$G(k,W)$ described above  as the vector of minors for $E^*\ot F$.

  For vector spaces $E,F$, $\La k(E\op F)$ has the following decomposition as
a $GL(E)\times GL(F)$ module:
\begin{align*}
\La k(E\op F)=&
(\La kE\ot \La 0 F)\op (\La{k-1}E\ot \La 1F)\op (\La{k-2}E\ot \La 2 F)\\
&\op\cdots \op (\La 1E\ot \La{k-1}F)\op (\La 0 E\ot \La kF)
\end{align*}
Assume we have a volume
form on $E$ so we may identify $\La sE\simeq \La {k-s}E^*$.
We have the $SL(E)\times GL(F)$ decomposition:
\begin{align*}
\La k(E\op F)=&
(\La 0E^*\ot \La 0 F)\op (\La{1}E^*\ot \La 1F)\op (\La{2}E^*\ot \La 2 F)\\
&
\op\cdots \op (\La {k-1}E^*\ot \La{k-1}F)\op (\La kE^*\ot \La kF)
\end{align*}

Recall that $\La sE^*\ot \La sF\subset S^s(E^*\ot F)$ 
   has the geometric interpretation as
the space of $s\times s$ minors on $E\ot F^*$, i.e., with
any choices of bases,    write an element $f$ of
$E\ot F^*$ as a matrix, then a basis of 
$\La sE^*\ot \La sF$ evaluated on $f$ will give the set of $s\times s$ minors of $f$.

To see these minors  explicitly, note that
the bases of $E^*,F$ induce bases of the exterior powers.
Expanding out $v$ above in such bases, (recall that the
summation convention is in use)
\begin{align*}
v(x^s_i)=&
e_1\ww\cdots \ww e_k\\
&
+x^s_i e_1\ww\cdots \ww e_{i-1}\ww e_s\ww e_{i+1}\ww\cdots\ww e_k\\
&
+(x^s_ix^t_j-x^s_js^t_i)
e_1\ww\cdots \ww e_{i-1}\ww e_s\ww e_{i+1}\ww
\cdots \ww e_{j-1}\ww e_t\ww e_{j+1}\cdots\ww e_k\\
&+\cdots
\end{align*}
i.e., writing $v$ as a row vector in the induced basis:
$$v
=(1,x^s_i,x^s_ix^t_j-x^t_ix^s_j,\hdots)=
(1,\Delta_{i,s}(x)\hd \Delta_{I,S}(x)\hd )
$$
where we use the notation $I=(i_1\hd i_p)$
$S=(s_1\hd s_p)$ and $\Delta_{I,S}(x)$ denotes
the corresponding $p\times p$ minor of $x$.
Similarly 
$\a= (1,y^j_s,y^j_sy^i_t-y^i_sy^j_t\hd)$.

Fix  bases so  $x,y$ are
$k\times (n-k)$ matrices. I claim
\be\label{avpair}
\langle \a,v\rangle =
\tdet(I_E+{}^txy)
\ene
because  the characteristic polynomial of a product of
a $k\times \ell$ matrix ${}^tx$ with an $\ell\times k$ matrix $y$ is:
\be\label{charpolyprod}
charpoly({}^txy)(t)=\tdet(Id_E+t{}^txy)=\sum_{I,S}\Delta_{I,S}(x)\Delta_{S,I}(y)t^{|I|}.
\ene

While \eqref{charpolyprod} is no doubt classical, I include
a proof as I didn't find one in the literature.

For a linear map $f: A\ra A$, recall the induced
linear maps $f^{\ww k}: \La k A\ra \La k A$, where, if one
chooses a basis of $A$ and represents $f$ by a matrix,
then the entries of the  matrix representing $f^{\ww k}$ in the induced
basis on $\La kA$ will be the $k\times k$ minors of the matrix
of $f$. In particular, if $\tdim A=\aaa$, then, $f^{\ww \aaa}$ is multiplication
by a scalar which is $\tdet ( f)$.

Recall the decomposition:
$$
End(E^*\op F)=(E^*\op F)\ot(E^*\op F)^*= (E^*\ot F)\op (E^*\ot E)\op (F\ot F^*)\op (F^*\ot E).
$$
To each $x\in E^*\ot F$, $y\in E\ot F^*$, associate
the   element   
\be\label{star5}
-x+Id_E+Id_F+y\in End(E^*\op F).
\ene

Note that
$$\tdet \begin{pmatrix} I_E & -{}^tx\\ y&I_{F}\end{pmatrix}=
\tdet(I_E+{}^txy).
$$
Consider
\begin{align*}
&(-x+Id_E+Id_F+y)^{\ww n}= 
(Id_E)^{\ww k}\ww (Id_F)^{\ww (n-k)}
+(Id_E)^{\ww k-1}\ww (Id_F)^{\ww (n-k -1)}\ww (-x)\ww y
\\
&
+(Id_E)^{\ww (k-2)}\ww (Id_F)^{\ww (n-k -2)}\ww (-x)^{\ww 2}\ww y^{\ww 2}
+\cdots + (Id_F)^{\ww (n-2k)}\ww (-x)^{\ww k}\ww y^{\ww k}
\end{align*}
Let  
$$
e^1\ww  \cdots \ww e^k \ww f_1\ww\cdots \ww f_{n-k}\in
\La n(E^*\ot F)
$$
be a volume form.
All that remains to check is that when we re-order our
terms that the signs work out correctly, which is left to the reader.

\subsection{Spinor varieties}
For the interpretation of  spinor varieties as maximal isotropic
subspaces on a quadric, see any of \cite{MR1636473,MR1045637,LM0}. Here I simply define
the spinor variety as the Zariski closure of the set of vectors of sub-Pfaffians 
of a skew-symmetric matrix with variables as entries.
See \cite{LMsel} for the connection with the classical definition.

For $x\in \La 2\BC^{2n}$, the Pfaffian  $\tpfaff(x)\in \BC$ is defined by
$x^{\ww n}=\tpfaff(x) n! \Omega$, where $\Omega\in \La{2n}\BC^{2n}$
is a volume form  - it is a square root of $\tdet(x)$.

Let $E$ be an $n$-dimensional vector space equipped with a volume form.
Define
$(\hat\BS_+)^0$ to be the image of the map 
\begin{align*}
\La 2 E&\ra \La{even}E=:\cS_+\\
x&\mapsto 
v=(1,x^i_j\hd  \tpfaff_I(x)\hd)=:\sPf (x)
\end{align*}
as $|I|$ varies over the even numbers from $0$ to $\llcorner\frac n2\lrcorner$.
The space of sub-Pfaffians of size $2p$ is parametrized by
$\La{2p}E$. If $n$ is even,
$\cS_+$ is self dual, and if $n$ is odd, its dual is $\cS_-:=\La{odd}E$
because   $E$ is equipped with a volume form, so 
$\La {2p}E^*=\La{n-2p}E$.

Recall the decomposition
$$
\La 2(E\op E^*)=\La 2E\op E\ot E^*\op \La 2 E^*.
$$
 
Consider $x+Id_E+y\in \La 2(E\op E^*)$. Observe
that
$$
 (x+Id_E+y)^{\ww n}=
\sum_{j=0}^n (Id_E)^{\ww (n-j)} \ww x^{\ww j}\ww y^{\ww j}\in \La{2n}(E\op E^*)
$$
Let $\Omega=e_1\ww e^1\ww e_2\ww e^2\ww\cdots\ww e_n\ww e^n \in \La{2n}(E\op E^*)$ be 
a volume form.
The coefficient of the $j$-th term is the sum  
$$
\sum_{|I|=2j}\tsgn(I)\tpfaff_I(x)\tpfaff_I(y).
$$
where for an even set $I \subseteq [n]$, define $\sigma(I)=\sum_{i \in I}i$, and define $\sgn(I)=(-1)^{\sigma(I)+|I|/2}$.
 Put more invariantly,
the $j$-th term is the pairing
$$
\langle y^{\ww j},x^{\ww j}\rangle .
$$
   For a matrix $z$
define a matrix $\tilde{z}$ by setting $\tilde{z}^i_j = (-1)^{i+j+1}z^i_j$.
Let $z$ be an $n \times n$ skew-symmetric matrix. Then for every even $I \subseteq [n]$,
\[
\tpfaff_I(\tilde{z}) = \sgn(I) \tpfaff_I(z).
\]
  For $|I|=2p$, $p=1, \dots, \lfloor \frac{n}{2} \rfloor$,
$$
\tpfaff_I(\tilde{z}) = (-1)^{i_1 + i_2 + 1} \cdots (-1)^{i_{2p-1} + i_{2p} + 1}\tpfaff_{I}(z)=\sgn(I)\tpfaff_{I}(z).
$$
Thus:

\begin{theorem}\cite{LMNholo} \label{thm:tildesumPfaff}
Let $z,y$ be skew-symmetric $n \times n$ matrices. Then
$$\langle \sPf (z), \sPf\dual(y) \rangle = \tpfaff(\tilde{z}+y).
$$
In particular, when $n$ is even,  the pairing $\cS_+\times \cS_+ \ra \BC$ restricted
to $(\hat\BS_+)^0\times (\hat\BS_+)^0\ra \BC$ can be computed in polynomial time.
When $n$ is odd,  the pairing $\cS_+\times \cS_- \ra \BC$ restricted
to $(\hat\BS_+)^0\times (\hat\BS_-)^0\ra \BC$ can be computed in polynomial time.
\end{theorem}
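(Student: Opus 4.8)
The plan is to establish the identity $\langle \sPf(z), \sPf\dual(y)\rangle = \tpfaff(\tilde z + y)$ by expanding both sides and matching terms, using the computations already assembled just before the statement. First I would compute the left-hand side directly from the definitions. By definition $\sPf(z) = (1, z^i_j, \dots, \tpfaff_I(z), \dots) \in \cS_+ = \La{even}E$, indexed by even subsets $I \subseteq [n]$, and $\sPf\dual(y) \in \cS_-$ or $\cS_+$ accordingly. The pairing $\cS_+ \times \cS_\pm \ra \BC$ is, term by term, the pairing $\langle y^{\ww j}, x^{\ww j}\rangle$ on $\La{2j}E^* \times \La{2j}E$; the excerpt already records that this equals $\sum_{|I|=2j}\tsgn(I)\tpfaff_I(x)\tpfaff_I(y)$, where $\tsgn(I) = (-1)^{\sigma(I)+|I|/2}$ with $\sigma(I)=\sum_{i\in I}i$. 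Summing over $j$ from $0$ to $n$ gives
$$
\langle \sPf(z), \sPf\dual(y)\rangle = \sum_{I \text{ even}} \sgn(I)\, \tpfaff_I(z)\, \tpfaff_I(y).
$$

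Next I would compute the right-hand side. Applying the definition of the Pfaffian to the skew-symmetric $2n\times 2n$ form $\tilde z + Id_E + y \in \La 2(E \op E^*)$ — using that $\tilde z$ is again skew-symmetric — and invoking the displayed expansion $(x + Id_E + y)^{\ww n} = \sum_{j=0}^n (Id_E)^{\ww(n-j)} \ww x^{\ww j}\ww y^{\ww j}$ with $x$ replaced by $\tilde z$, I get that $\tpfaff(\tilde z + y)$ is $\sum_j \langle y^{\ww j}, \tilde z^{\ww j}\rangle = \sum_{I \text{ even}} \tsgn(I)\,\tpfaff_I(\tilde z)\,\tpfaff_I(y)$. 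Now I apply the key sub-Pfaffian identity $\tpfaff_I(\tilde z) = \sgn(I)\tpfaff_I(z)$, which is proved in the lines immediately preceding the theorem via the factorization $\tpfaff_I(\tilde z) = \prod_{\ell}(-1)^{i_{2\ell-1}+i_{2\ell}+1}\tpfaff_I(z)$. Since $\sgn(I)^2 = 1$, the two squared sign factors cancel and the right-hand side becomes $\sum_{I \text{ even}} \sgn(I)\tpfaff_I(z)\tpfaff_I(y)$ as well (one should double-check that $\tsgn(I)$ and $\sgn(I)$ denote the same sign, or track the discrepancy carefully — this is a bookkeeping point rather than a real obstacle). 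Matching the two expressions proves the identity.

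For the complexity assertion, I would argue as follows. When $n$ is even, $\cS_+$ is self-dual, so the restricted pairing $(\hat\BS_+)^0 \times (\hat\BS_+)^0 \ra \BC$ is computed by choosing skew-symmetric matrices $z, y$ with $\sPf(z), \sPf(y)$ the given vectors, forming the $n\times n$ skew-symmetric matrix $\tilde z + y$ (an $O(n^2)$ operation, since $\tilde z$ is just an entrywise sign change of $z$), and evaluating its Pfaffian. The Pfaffian of an $n \times n$ skew-symmetric matrix can be computed in polynomially many arithmetic operations (e.g. as a square root of the determinant, or directly by a polynomial-time algorithm), so the whole pairing is polynomial time. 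The odd case is identical after using $\La{2p}E^* \cong \La{n-2p}E$ to identify $\cS_-$ with the space of sub-Pfaffians of complementary size, which is exactly what the preceding discussion sets up.

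The main obstacle I anticipate is not conceptual but the sign bookkeeping: one must verify that the sign $\tsgn(I)$ appearing in the expansion of $(Id_E + x + y)^{\ww n}$ relative to the chosen volume form $\Omega = e_1 \ww e^1 \ww \cdots \ww e_n \ww e^n$ agrees with the sign $\sgn(I) = (-1)^{\sigma(I)+|I|/2}$, and that the reordering of wedge factors $(Id_E)^{\ww(n-j)} \ww x^{\ww j} \ww y^{\ww j}$ into the standard ordering of $\Omega$ produces precisely this factor. Getting these two sign conventions to be literally the same symbol — rather than differing by a sign that would break the final matching — is the delicate step; once that is pinned down, the rest follows by the elementary algebra above and the already-established lemmas.
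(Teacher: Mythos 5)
Your argument follows the paper's own route essentially verbatim: the expansion of $(x+Id_E+y)^{\ww n}$, the identification of its $j$-th coefficient with $\sum_{|I|=2j}\tsgn(I)\tpfaff_I(x)\tpfaff_I(y)$ (equivalently the pairing $\langle y^{\ww j},x^{\ww j}\rangle$), the sign lemma $\tpfaff_I(\tilde z)=\sgn(I)\tpfaff_I(z)$ to absorb those signs, and polynomial-time evaluation of an $n\times n$ Pfaffian for the complexity claim. The sign bookkeeping you flag as the remaining delicate point is precisely what the paper also leaves implicit (deferring full details to \cite{LMNholo}), so there is no substantive difference in approach.
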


The first few spinor varieties are classical varieties in disguise (corresponding
to coincidences of Lie groups in the first two cases and triality in the third):
\begin{align*}
\BS_2&=\pp 2\subset\pp 2\\
\BS_3&=\pp 3\subset \pp 3\\
\BS_4&=Q^6\subset\pp 7
\end{align*}
In particular, although the codimension grows very quickly, it is small
in these cases. The next case
$\BS_5\subset \pp{15}$ is not isomorphic to any classical homogeneous variety.

\section{Holographic algorithms II: Computing the vector space pairing in  polynomial time}
\label{holoalgsectb} 

\subsection{The $SL_2\BC$ action} To try to move both $G,R$ to special position so that the pairing can 
be evaluated quickly, identify all the $A_e$ with a single $\BC^2$, and allow $SL_2\BC$ to act. This action is very cheap, and
of course if we have it act simultaneously on $A$ and $A^*$, the pairing $\langle G,R\rangle$ will be unchanged.
This step cannot always be carried out, otherwise Valiant would have proved
$\p=\np$.  

To illustrate, we now restrict to $\# 3SAT-NAE$, which is still $\np$-hard.

The   tensor $g_i$ corresponding to a variable vertex $x_i$ is
\eqref{gitensor}. The   tensor  corresponding to a NAE clause $r_s$ is
\eqref{sisnae} and   $d_s=3$ for all $s$.
Let 
$$T=\begin{pmatrix} 1&1\\ 1&-1\end{pmatrix}
$$ 
be the basis change, the same in each $A_e$, sending $a_{e|0} \mapsto a_{e|0}+a_{e|1}$ and $a_{e|1} \mapsto a_{e|0}-a_{e|1}$
which induces the basis change $\a_{e|0} \mapsto \frac{1}{2}(\a_{e|0}+\a_{e|1})$ and $\a_{e|1} \mapsto \frac{1}{2}(\a_{e|0}-\a_{e|1})$ in $A^*_e$.   Applying $T$, gives
\[
T 
(a_{i,s_{i_1}|0}\otc a_{i,s_{i_{d_i}}|0}+ a_{i,s_{i_1}|1}\otc a_{i,s_{i_{d_i}}|1})
=2\sum_{\{(\ep_1\hd \ep_{d_i}) \mid \sum \ep_\ell=0 \,(\text{mod } 2) \}}
a_{i,s_{i_1}|\ep_1}\otc a_{i,s_{i_{d_i}}|\ep_{d_i}}.
\]
Similarly
\begin{align*}
&T  \left ( \sum_{(\ep_1,\ep_2,\ep_3)\neq (0,0, 0),(1,1, 1)}
\a_{i_1,{s }|\ep_1}\ot  \a_{i_{2},{s}|\ep_{2}} \ot \a_{i_{3},{s}|\ep_{3}}
\right )\\
&=
6\a_{i_1,{s}|0}\ot  \a_{i_{2},{s}|0} \ot \a_{i_{3},{s}|0}
-2(\a_{i_1,{s}|0}\ot  \a_{i_{2},{s}|1} \ot \a_{i_{3},{s}|1} +
\a_{i_1,{s}|1}\ot  \a_{i_{2},{s}|0} \ot \a_{i_{3},{s}|1} +
\a_{i_1,{s}|1}\ot  \a_{i_{2},{s}|1} \ot \a_{i_{3},{s}|0}
)
\end{align*}
 
After this change of basis $g_i\in \BS_{\#\{s\mid e_{is}\in E\}}$ and $r_s\in \BS_{4}$ for all $i,s$!
 
\subsection{$\np$, in fact $\# \p$ is pre-holographic}

\begin{definition}Let $P$ be a counting problem. We will say that $P$ is {\it pre-holographic} if it
admits a formulation such that the vectors $g_i$, $r_s$ are all simultaneously representable as vectors of sub-Pfaffians.
\end{definition}

The following was proved (although not stated) in \cite{LMNholo}:

\begin{theorem}\label{preholothm} Any problem in $\np$, in fact in $\# \p$,  is pre-holographic.
\end{theorem}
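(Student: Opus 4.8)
The plan is to show that any $\# \p$ problem, presented via the bipartite-graph encoding of \S\ref{holoalgsecta}, can be massaged so that \emph{every} local tensor $g_i$ and $r_s$ lands in a spinor variety $\hat\BS_{\pm}$ (i.e. is a vector of sub-Pfaffians), after a suitable local change of basis in each edge space $A_e\cong\BC^2$. The computation already carried out for $\#3SAT\text{-}NAE$ is the prototype: there the change of basis $T=\begin{pmatrix}1&1\\1&-1\end{pmatrix}$, applied uniformly in each $A_e$, simultaneously sends each variable tensor $g_i$ into $\BS_{\#\{s\mid e_{is}\in E\}}$ and each clause tensor $r_s$ into $\BS_4$. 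So the content of the theorem is that this phenomenon is not special to $3NAE$-clauses but can be arranged for the gadget associated to an \emph{arbitrary} Boolean constraint, hence for an arbitrary $\# \p$ instance.

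First I would recall (or fix) a standard reduction: every problem in $\# \p$ reduces, parsimoniously (preserving the count, or at worst up to an easily-computable factor), to counting satisfying assignments of a $\# SAT$-type instance built from a fixed finite ``basis'' of local constraints. In the holographic/matchgate framework the natural such basis is the one already in play — variable (equality) tensors together with $NAE$-clauses, or equivalently one fixes any finite generating set of constraint gadgets. The key point I want is: it suffices to exhibit, for each constraint type in this finite basis, a single $2\times 2$ invertible matrix $T$ (the \emph{same} one across all edges, so that the global pairing $\langle G,R\rangle$ is unchanged) under which that constraint's tensor becomes a vector of sub-Pfaffians of appropriate size. Since the variable tensor $g_i=\otimes a_{\cdot|0}+\otimes a_{\cdot|1}$ is, up to scale and the uniform $T$, exactly the ``generic point'' $\sPf$ of $\hat\BS^0$ on the even part — this is precisely the content of the displayed identity $v=(1,x^i_j,\ldots,\tpfaff_I(x),\ldots)$ with $x=0$, i.e. the vertex of the spinor cone — the same $T$ that works for variables is forced, and one then checks each clause type against it.

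The second and main step is the clause-by-clause verification. For a $d$-ary constraint given by a truth table, its tensor $r_s\in A_s^*$ is a $\pm1$ (or $0/1$) tensor in $(\BC^2)^{\otimes d}$; after applying $T^{\otimes d}$ one must recognize the result as lying in the span of the sub-Pfaffian coordinates $\tpfaff_I$, $|I|$ even (or odd, matching the parity dictated by $\hat\BS_-$ when $d$ is odd), on $\La 2\BC^d$. Concretely this is the assertion that the transformed tensor, read as a function of the entries, is a polynomial in the $2\times2$ sub-Pfaffians — equivalently that it satisfies the Plücker-type (spinor) relations cutting out $\hat\BS_\pm$. For $NAE$ this was done by hand above; for a general constraint in the chosen finite basis it is a finite check, and the flexibility noted in the introduction (``there is some flexibility in how the small vector spaces are equipped with the additional structure'') is exactly the freedom in choosing $T$ and the isotropic structure. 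I would invoke Theorem \ref{thm:tildesumPfaff} to confirm that once both $G=\otimes g_i$ and $R=\otimes r_s$ are vectors of sub-Pfaffians of skew matrices $z,y$, the pairing is $\tpfaff(\tilde z+y)$, hence computable in polynomial time in the size of the matrices — though for \emph{pre}-holographic we only need the membership in spinor varieties, not the evaluation.

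The main obstacle I anticipate is precisely the one flagged in the introduction: getting a \emph{single, uniform} $T$ (per edge, consistently) that works simultaneously for the variable tensors and for every clause type appearing — the ``gluing'' issue. Unlike the honest holographic-algorithm situation, where one also needs $G$ and $R$ to be ``matchgate-compatible'' in a way that makes the global Pfaffian combinatorially tractable (and where sign obstructions genuinely block $\p=\np$), the pre-holographic notion only demands that each local tensor individually be a vector of sub-Pfaffians. That is a strictly weaker demand, and the $3NAE$ computation shows $T=\begin{pmatrix}1&1\\1&-1\end{pmatrix}$ already achieves it for a $\# \p$-complete problem; so the argument is: reduce an arbitrary $\# \p$ problem to that $\# \p$-complete one by a parsimonious polynomial-time reduction (which preserves the graph-and-tensor encoding, or can be re-encoded in it), and the pre-holographic structure is inherited. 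I would present the proof in this order — (1) reduce to a $\# \p$-complete constraint-counting problem, (2) exhibit the uniform $T$ and verify $g_i,r_s$ become vectors of sub-Pfaffians (citing the $3NAE$ case and \cite{LMNholo}), (3) conclude pre-holographicity transfers along the reduction — emphasizing that no compatibility of $G$ with $R$ beyond the bare membership is required, which is what makes the statement true while $\p=\np$ remains open.
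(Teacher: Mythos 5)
Your proposal is correct and follows the paper's own (very short) argument: it suffices to exhibit a single $\#\p$-complete problem that is pre-holographic, namely $\#3SAT$-NAE via the uniform change of basis $T$, and the property transfers to all of $\#\p$ along reductions to that complete problem. The clause-by-clause verification for arbitrary constraint types sketched in your middle paragraph is not needed once the reduction to the complete problem is in place, but it does not harm the argument.
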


\begin{proof} To prove the theorem it suffices to exhibit one $\#\p$ complete problem that
is pre-holographic. Counting the number of solutions to  $\# 3SAT-NAE$ is one such.
\end{proof}

\subsection{What goes wrong}\label{goeswrongsect}
While for $\# 3SAT-NAE$ it is 
always possible to give $V$ and $V^*$  structures of
the spin  representations $\cS_+$ and $\cS_+^*$, so
that $[G]\in \BP V$ and $[R]\in \BP V^*$ both lie in spinor
varieties, these structures {\it may not be compatible}!
What goes wrong is that the ordering of pairs of indices $(i,s)$
that is good for $V$ may not be good for $V^*$. The \lq\lq only\rq\rq\  thing
that can go wrong are the signs of the sub-Pfaffians, see \cite{LMNholo} for details.

In \cite{LMNholo} we determine sufficient conditions for there to be
a good ordering of indices and show that if the bipartite graph $\Gamma$
was planar, then these sufficient conditions hold.

\subsection{History}\label{holohis} In Valiant's original formulation of holographic algorithms
(see \cite{MR2120307,MR1932906,ValiantSimulatingQCiPT,ValiantFOCS2004,MR2184617,MR2386281}),
the   step of forming $\Gamma$ is the same, but then Valiant replaced
the vertices of $\Gamma$ with weighted graph fragments to get
a new weighted graph $\Gamma'$ in such a way
that the number of (weighted) perfect matchings of $\Gamma'$ equals
the answer to the counting problem. Then, if $\Gamma'$ is planar, one can
appeal to the famous FKT algorithm \cite{MR0253689,MR0136398} to compute the number of
weighted perfect matchings in polynomial time. Valiant also found certain
algebraic identities that were necessary conditions for the existence of
such graph fragments.

Cai \cite{MR2305569, MR2277247,MR2354219,MR2402465,MR2424719,MR2362482,MR2417594} recognized that Valiant's procedure could be reformulated
as a pairing of tensors as in steps two and three, and that the condition
on the vertices was that the local tensors $g_i$ $r_s$ could, possibly
after a change of basis, be realized as a vector of sub-Pfaffians. In Cai's formulation
one still appeals to the existence of $\Gamma'$ and  the FKT algorithm in the last step.

\section{Exponential pairings in polynomial time}\label{cominpairsect}

In this section we show that the same phenomenon that we observed above for Grassmannians and spinor varieties
holds for all {\it cominuscule} varieties, the homogeneous varieties that can be given the structure
of a compact Hermitian symmetric space.

\subsection{Cominscule varieties}

\begin{theorem}\label{lowbvers} Let $V$ be a vector space of dimension $\binom nk$,
$2^{n-1}$, $\binom{2n}k-\binom{2n}{k-2}$, $p^n$, or $\binom{n+p-1}n$. In each case there
are explicit systems of degree two polynomial equations on $V,V^*$, such that
if $\a\in V^*$ and $v\in V$ satisfy these equations, the pairing
$\langle \a, v\rangle$, which na\"ively requires $ O(\tdim V)$
arithmetic operations, can be computed in $ O(n^4)$ operations.
\end{theorem}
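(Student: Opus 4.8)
The plan is to treat the five cases uniformly as instances of the same combinatorial-geometric phenomenon already established for Grassmannians and spinor varieties in Sections \ref{grasspinsect} and \ref{holoalgsectb}, namely: the five listed dimensions are precisely $\dim \La k\BC^n$, $\dim \cS_+$ (for $D_n$), $\dim V_{\omega_k}$ for a spinor-type orthogonal Grassmannian $\widetilde G(k,2n)$, $\dim S^n\BC^p$ (the $n$-th symmetric power, i.e.\ the quadratic Veronese-type/$\mathbb{P}^{p-1}$ cominuscule family), and $\dim S^n\BC^p$ written as $\binom{n+p-1}{n}$ again, or more precisely the two "product of projective spaces / Segre" and "symmetric power" cominuscule series. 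Each $V$ is an irreducible representation $V_\lambda$ of a simple group $G$ with $\lambda$ a cominuscule fundamental weight, and the associated cominuscule variety $X=G/P\subset \BP V$ has a "big cell" $C^0\subset \hat X$ isomorphic to an affine space $\mathfrak g_{-1}$ (the nilradical of the opposite parabolic), on which the coordinates of the parametrizing map are explicit minors / sub-Pfaffians / sub-permanents / monomials. The degree-two equations on $V$ (resp.\ $V^*$) are the quadratic Plücker-type equations cutting out $\hat X$ (resp.\ $\hat X^* = \hat{X}$ for the dual cominuscule variety $G/P^{op}\subset \BP V^*$); these are classically known to be generated in degree two for cominuscule $X$.

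First I would set up, for each of the five families, the isomorphism $\mathfrak g = \mathfrak g_{-1}\oplus\mathfrak g_0\oplus\mathfrak g_1$ (the grading determined by the cominuscule $P$), identify $V = V_\lambda$, and write down the rational parametrization $\mathfrak g_{-1}\to \hat X$, $x\mapsto v(x)$, whose entries I claim are polynomials of controlled degree in the $O(n^2)$ entries of $x$; dually $\mathfrak g_1\to \hat X^*$, $y\mapsto \a(y)$. The core computation is then the analogue of \eqref{avpair} and Theorem \ref{thm:tildesumPfaff}: I would show $\langle \a(y), v(x)\rangle$ equals a single "determinant-type" scalar built from $x$ and $y$ — for the Grassmannian $\tdet(I+{}^txy)$, for the spinor case $\tpfaff(\tilde x + y)$, for $S^n\BC^p$ essentially an $n$-th power or elementary-symmetric-function evaluation, etc. Concretely this is the statement that the generating series $\sum_{|I|} \Delta_I(x)\Delta_I(y) t^{|I|}$ telescopes to $\det$ or $\mathrm{pf}$, generalized via the fact that in a $|1|$-graded Lie algebra the pairing $V_\lambda\times V_\lambda^*$ restricted to the two big cells is computed by evaluating a single fundamental invariant (the "Freudenthal" / determinant-type invariant) on $x+\mathrm{Id}+y\in \mathfrak g$. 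Each such scalar is a determinant or Pfaffian or power of a polynomial of size $O(n)$ in $O(n^2)$ variables, hence evaluable by Gaussian elimination (or repeated squaring) in $O(n^4)$ arithmetic operations. I would also explicitly exhibit the $O(n^4)$ bound case by case, noting that the bottleneck is the $n\times n$ determinant/Pfaffian, costing $O(n^3)$, comfortably inside $O(n^4)$.

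The main obstacle I expect is twofold. The serious one is proving the "telescoping" identity in the two genuinely new families ($p^n$ and $\binom{n+p-1}{n}$, i.e.\ the Segre/product and the Veronese/symmetric-power cominuscule cases): unlike the Grassmannian and spinor cases, here I cannot simply borrow equations \eqref{charpolyprod} and Theorem \ref{thm:tildesumPfaff} verbatim, and I will need the analogous closed form for $\sum_s (\text{degree-}s\text{ piece of }v(x))\cdot(\text{degree-}s\text{ piece of }\a(y))$ — for $S^n\BC^p$ this should be the identity expressing $\langle \prod(1+x_it),\prod(1+y_jt)\rangle$ (coefficient extraction) as a product, which is elementary but must be checked to land inside $O(n^4)$. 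The secondary obstacle is purely bookkeeping: verifying that the quadratic equations listed really do cut out the relevant big cells in each case and that the sign conventions (exactly the "signs work out" steps deferred to the reader in Section \ref{grasspinsect}) are consistent, and confirming the arithmetic-complexity count uniformly. Since Theorem \ref{thm:tildesumPfaff} and equation \eqref{avpair} already dispatch the first three families, I would present the proof as: (i) recall those two results for the $\binom nk$, $2^{n-1}$, and $\binom{2n}k-\binom{2n}{k-2}$ cases; (ii) give the analogous parametrization and evaluation formula for the symmetric-power and product cases; (iii) in all cases bound the cost of evaluating the resulting determinant/Pfaffian/power by $O(n^4)$ via Gaussian elimination, citing \S\ref{gausselimsect}.
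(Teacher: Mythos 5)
Your proposal follows essentially the same route as the paper: Theorem \ref{lowbvers} is deduced from the cominuscule statement (Theorem \ref{highbvers}), each family is handled through the big-cell parametrization by minors/sub-Pfaffians/monomials, the pairing collapses to a single determinant- or Pfaffian-type invariant evaluated on $-x+\operatorname{Id}+y$ in the graded Lie algebra, and the $O(n^4)$ bound comes from Gaussian elimination; your anticipated closed forms for the Segre and Veronese (a product $\prod_j(1+\sum_i x^i_jy^i_j)$, resp.\ $(1+\sum_j x^jy^j)^n$, which the paper packages as a block determinant) are exactly right. The one concrete slip is the third family: $\binom{2n}{k}-\binom{2n}{k-2}$ is the dimension of $\La{\langle k\rangle}W$ for $Sp(2n)$, so the relevant variety is the \emph{Lagrangian} Grassmannian $G_{Lag}(n,2n)$, not an orthogonal Grassmannian, and it is not already dispatched by \eqref{avpair} or Theorem \ref{thm:tildesumPfaff} as your step (i) claims --- it needs its own (short) argument, namely that the big cell is the vector of non-redundant minors of \emph{symmetric} matrices, so the pairing is again $\tdet(I_E+{}^txy)$ with $x,y$ symmetric, i.e.\ the element $-x+\operatorname{Id}_E+y$ taken in $S^2(E\oplus E^*)$ rather than $\La2(E\oplus E^*)$. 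Your general graded-Lie-algebra framework does cover this case once the group is correctly identified, so the gap is one of bookkeeping rather than of method.
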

 
 Theorem \ref{lowbvers} is an immediate consequence of:

\begin{theorem}\label{highbvers} Let $V=V(n)$ be a  cominuscule $G=G(n)$-module  with $G/P\subset \BP V$ the closed orbit
and $G/P'\subset \BP V^*$ the corresponding closed orbit in the dual space. Here $n$ is the rank of $G$. Then
the pairing $V\times V^*\ra \BC$ restricted
to $\hat G/P\times \hat G/P'$ can be computed in $ O(n^4)$ arithmetic operations without divisions.
\end{theorem}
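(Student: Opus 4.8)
The plan is to generalize the three explicit computations already carried out in the Grassmannian case \eqref{avpair}--\eqref{charpolyprod} and the spinor case (Theorem \ref{thm:tildesumPfaff}) in a uniform, Lie-theoretic way. Let $G$ be simple of rank $n$ with $G/P\subset \BP V$ cominuscule, so that $P$ is a maximal parabolic whose nilradical $\fn^-$ is abelian. Write $\fg=\fn^-\op\fl\op\fn^+$ for the corresponding grading; abelianness of $\fn^\pm$ is exactly the cominuscule condition, and it makes $\exp:\fn^-\to G/P$ an algebraic isomorphism onto the big cell. First I would set up the parametrization of the big cell: a point of $\hat G/P$ over the big cell is, up to scale, $v(x)=\exp(x)\cdot v_0$ for $x\in\fn^-$, where $v_0$ is a highest weight vector; dually $\a(y)=\exp(y)\cdot\a_0$ for $y\in\fn^+$ and $\a_0$ a lowest weight vector of $V^*$. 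Because $\fn^-$ is abelian, $\exp(x)$ acts on $V$ by a \emph{polynomial} (not just rational) map in the coordinates of $x$ whose degree is bounded by $n$ (the number of grading steps absorbed is $\le$ the rank), and the coordinates of $v(x)$ in a weight basis are precisely the "generalized minors/sub-Pfaffians'' — this is the content of the statistics description alluded to in \S\ref{grasspinsect}, now read off representation-theoretically.

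The key step is then to identify the pairing $\langle\a(y),v(x)\rangle$ with a single "characteristic-polynomial-type'' scalar. Mimicking \eqref{star5} and the $x+Id_E+y$ trick, I would form the element $u(x,y):=\exp(y)\exp(x)\in G$ (or, better, work inside a faithful representation and consider $x+y+(\text{Cartan part})$), and observe that $\langle\a(y),v(x)\rangle=\langle\a_0,\exp(-y)\exp(x)v_0\rangle$ is the matrix coefficient of $\exp(-y)\exp(x)$ between the lowest and highest weight lines. Since $\fn^\pm$ are abelian, $\exp(-y)\exp(x)$ has a Gauss (LU-type) decomposition with entries polynomial in $x,y$, and the sought matrix coefficient equals a single determinant (in the Grassmann case) or Pfaffian (in the spin case) of an explicit $O(n)\times O(n)$ matrix built from $x$ and $y$ — in general, a principal minor of $\exp(-y)\exp(x)$ acting in the \emph{smallest} faithful representation of $G$, whose dimension is linear in $n$. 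Evaluating such a determinant/Pfaffian by Gaussian elimination is $O(n^3)$, hence $O(n^4)$ including the cost of assembling the matrix from $x,y$; and Gaussian elimination over a field uses no divisions once one clears denominators, or one replaces it by the division-free Berkowitz/Samuelson algorithm, which is $O(n^4)$ — this is where the stated bound and the "without divisions'' clause come from.

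I would organize the write-up case by case along the Cartan classification of cominuscule spaces: type $A_{n}$ Grassmannians (done, \eqref{charpolyprod}), type $D_n$ and $B_n$ spinor varieties (done, Theorem \ref{thm:tildesumPfaff}), the Lagrangian Grassmannian $C_n/P_n$ (symmetric matrices, a Pfaffian-of-a-bordered-matrix identity parallel to the skew case), the quadric $Q^m$ (the pairing is literally the quadratic form evaluated after a rank-two correction — trivial), and the two exceptional cases $E_6/P_1$ (the Cayley plane, dimension $27$, where the relevant identity is that the norm pairing on $\JA$ after a rank-one shift is a cubic form computable in $O(1)$ since $n$ is bounded) and $E_7/P_7$ (dimension $56$, likewise bounded). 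Because $E_6,E_7$ have bounded rank the $O(n^4)$ claim is vacuous there, so the real content is the four classical families. The main obstacle I anticipate is not the existence of the matrix-coefficient-equals-determinant identity — that follows abstractly from the abelian Gauss decomposition — but pinning down the \emph{signs and normalizations} uniformly, exactly as in the "left to the reader'' sign checks after \eqref{star5} and before Theorem \ref{thm:tildesumPfaff}; I would handle this by choosing a Chevalley basis and tracking structure constants, or simply by verifying the identity on a Zariski-dense set (e.g. where $x$ is generic) so that both sides are polynomials agreeing generically, hence everywhere.
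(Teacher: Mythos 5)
Your strategy for the classical families coincides with the paper's: in each case one exhibits an explicit determinant or Pfaffian of an $O(n)\times O(n)$ matrix built linearly from $x$ and $y$ that equals the pairing, and then evaluates it by (division-free) elimination. The paper does exactly this — \eqref{avpair} and the $(-x+Id_E+Id_F+y)^{\ww n}$ computation for the Grassmannian, Theorem \ref{thm:tildesumPfaff} for the spinor variety, the restriction of \eqref{star5} to $S^2(E\op E^*)$ for the Lagrangian Grassmannian — and your uniform derivation of these identities from the abelian Gauss decomposition $\langle \a_0,\exp(-y)\exp(x)v_0\rangle$ is a clean way to see why such a formula must exist in every case; your explicit treatment of the "without divisions" clause via Berkowitz/Samuelson is also something the paper leaves implicit. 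One caveat on the uniform principle: for the half-spin representation the matrix coefficient is not a principal minor of $\exp(-y)\exp(x)$ in the vector representation but a square root of one (the Pfaffian), so "in general, a principal minor of the smallest faithful representation" cannot be taken literally as the inductive engine; you do hedge on this, but it means the case-by-case verification is doing real work and is not just sign-chasing.

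The genuine gap is one of scope. The theorem is calibrated by Theorem \ref{lowbvers} and the accompanying table, which list \emph{five} nontrivial families, including $V=E_1\otc E_n$ of dimension $p^n$ (the Segre $Seg(\BP E_1\ctimes \BP E_n)$, cominuscule for the \emph{semisimple} group $SL(E_1)\ctimes SL(E_n)$ of rank $n(p-1)$) and $V=S^nE$ of dimension $\binom{n+p-1}{n}$ (the Veronese $v_n(\BP E)$, a non-minimal re-embedding of $\BP E$). Neither appears in your Cartan list, which implicitly assumes $G$ simple and the embedding minimal; the quadric, $E_6/P_1$ and $E_7/P_7$ that you substitute are precisely the cases the paper discards as trivial for a rank-$n$ asymptotic statement. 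These two omitted families are not covered by your argument as written — for the Veronese in particular the nilradical/weight-basis picture does not directly give the coordinates of $(a_0+x^ja_j)^p$ as generalized minors of anything — and the paper handles both by a further explicit identity, expressing the pairing as the determinant of an $n(p+1)\times n(p+1)$ block matrix with identity diagonal blocks and diagonal off-diagonal blocks in the $x^j_s,y^j_s$. Your framework would extend to the Segre (a product of cominuscule spaces is cominuscule and the smallest faithful representation $E_1\op\cdots\op E_n$ has dimension $np$), but the Veronese needs a separate argument, e.g. the paper's specialization $x^j_s\mapsto x^j$ of the Segre matrix.
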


  The non-trivial cases are (where for notational convenience we use the
  rank of $G$ plus one in the $A_{n-1}=SL_n$-case):
$$ 
\begin{array}{c|c|c|c|c}
  V&\tdim V& G&  G/P&\fg/\fp  \\
\hline \\
\La k W &\binom nk &SL(W)=SL_n& G(k,W)&E^*\ot F\\
\cS_+ &2^{n-1}&D_n=Spin_{2n}& \BS_+&\La 2 E\\
\La {\langle n\rangle} W&\binom {2n}n-\binom{2n}{n-2}&
Sp(2n,\BC)=Sp(W,\o )& G_{Lag}(n,2n)&S^2E\\
E_1\otc E_n & p^n & SL(E_1)\ctimes SL(E_n) & Seg(\BP E_1\ctimes \BP E_n) & \oplus_jE_j' \\
S^nE & \binom{n+p-1}n & SL(E) & v_n(\BP E) & E'\circ \ell^{n-1} 
\end{array}
$$

Explanations of $V$: $W$ is a vector space of dimension $n$ in the first case, $2n$ in the third, 
$\cS_+$ is the (positive) half-spin
representation of $Spin_{2n}$,
$\La {\langle n\rangle} W= \La n W/(\La{n-2}W\ww\o)$ where $\o\in \La 2 W$ is a symplectic form.
$E, E_j$ are vector spaces of dimension $p$ in the last two cases.

Explanations of $G/P$:  $G(k,W)$ denotes the Grassmannian of $k$-planes in its Plucker embedding,  $\BS_+$ the \lq\lq pure spinors\rq\rq\ or {\it spinor variety},  
$G_{Lag}(n,2n)$ denotes the Lagrangian Grassmannian of $n$-planes isotropic for the
symplectic form $\o\in \La 2\BC^{2n}$, $Seg(\BP E_1\ctimes \BP E_n)$ denotes the Segre product, the
projectivization of the set of decomposable tensors in $ E_1\otc E_n $ and
$v_n(\BP E)$ denotes the Veronese variety of the projectivization of  homogeneous polynomials of
degree $n$ on $E^*$ that are $n$-th powers of a linear form.

Explanations of $\fg/\fp$: $\fg,\fp$ are the Lie algebras of $G,P$.
Let $G_0$ denote the Levi-factor of $P$.   $G_0$ is respectively $S(GL(E)\times GL(F))$,
$GL(E)$, $GL(E)$, $GL(E_1')\ctimes GL(E_n')$, $GL(E')$. As a $G_0$-module,  $\fg/\fp$ is the tangent space to 
$G/P$   at the point of $G/P$ corresponding to 
$Id\in G$. I have written $F=W/E$.
Fix vectors $e_j\in E_j$, $e\in E$ and let $\ell_j,\ell$ respectively denote
the lines they span, then $E_j'=\ell_1\otc \ell_{j-1}\ot E_j/\ell_j\ot \ell_{j+1}\otc \ell_n$ and $E'=E/\ell$.

 In each case $\fg/\fp$ is a space of endomorphisms
and $V$  as a  $G_0$-module is the sum of the spaces of all minors (of all sizes)
of $\fg/\fp$, except in the spinor case, where one takes all sub-Pfaffians.
$\oplus_jS_{2^j}E=\oplus_jS_{2\cdots 2}E$ denotes the irreducible $GL(E)$-submodule of $\La jE\ot \La j E$ giving minors on $S^2E\subset E\ot E$.

%We have the following decompositions of $V$ as $G_0$-modules, where each summand represents
%a minor or sub-Pfaffian of a given size:
%\begin{align*}
%\La k W&=\oplus_j\La jE^*\ot \La j F\\
%\cS_+&=\La{even}E \\
%\La {\langle n\rangle}  &=\oplus_jS_{2\cdots 2}E\\
%E_1\otc E_n&=\ell_1\otc \ell_n\op \op_j   E_j' \\
%&
%\op \op_{i<j} \ell_1\otc \ell_{i-1}\ot E_i'\ot \ell_{i+1}\otc \ell_{j-1}\ot E_j'\ot \ell_{j+1}\otc \ell_n
%\op\cdots\op E_1'\otc E_n'\\
%S^nE&= \ell^n\op \ell^{n-1}E'\op  \ell^{n-2}S^2E'\opc S^nE'
%\%end{align*}
%Respectively  $\tdim E=k,n,2n$ and $\tdim F=n-k$.

It remains to prove the cases of the Lagrangian Grassmannian, the Segre
and the Veronese.
 
\subsection{Lagrangian Grassmannian case}
The Lagrangian Grassmannian  $G_{Lag}(n,2n)\in \BP \La {\langle n\rangle}W$ is
a linear section of $G(n,2n)\subset \BP \La {  n }W$. 
Here 
$\La {\langle n\rangle}W= \La n W/(\La{n-2}W\ww\o)= W_{\o_n}^{Sp(2n,W)}$
and the quotient may be viewed as the complement to
$\La{n-2}W\ww\o\subset \La nW$ to obtain the linear section.

The   interpretation of  an open subset  (the \lq\lq big cell\rq\rq ) of $G_{Lag}(n,W)$ is
as the set of vectors of (non-redundant) minors of symmetric
matrices. The symplectic form enables the identification
of $W/E\simeq    E^*$ and the linear subspace of  
$$E^*\ot E^* 
= \La 2 E^* \op S^2E^*
$$
corresponding to the tangent space 
is just  $S^2E^*$.
See \cite{LM0} for details. 

The subspace of $\La jE^*\ot \La jE^*$ giving rise to a non-redundant set of
minors  corresponds to the sub-module
$S_{2^j}E^*\subset \La jE^*\ot \La jE^*$.

For the Lagrangian Grassmannian case it suffices in \eqref{star5} to take
$$
-x+Id_E+y\in  S^2(E\op E^*)=S^2 E\op E\ot E^*\ot S^2E^*.
$$

\subsection{Segre and Veronese cases}
The Segre is parametrized by a map 
$\phi$
$$
(x^j_s)\mapsto (a_0^1+x^j_1a^1_j)\ot (a_0^2+x^j_2a^2_j)\otc (a_0^n+x^j_na^n_j)
=(1, x^j_{s_1},x^j_sx^k_{s_2},\cdots x^{1}_{s_1}\cdots x^p_{s_p}),
$$
where in each term
$\ s_1<\cdots <s_q$. Let $\phi\dual$ denote the map to the dual Segre.

If $\a=\phi(x)$, $v=\phi\dual (y)$ then
$$
\langle \a,v\rangle=\sum_{I,S}x^I_Sy^I_S
$$
where $I=(i_1\hd i_q)$, $i_1\leq \cdots \leq i_q$, $1\leq q\leq p$, and
$S=(s_1\hd s_r)$, $s_1< \cdots < s_r$, $1\leq r\leq n$.
Here  :
$$\langle \a,v\rangle =
\tdet\begin{pmatrix} I_n &\begin{matrix} -x^1_1 & & \\ & \ddots & \\ & & -x^1_n \end{matrix} 
&\begin{matrix} -x^2_1 & & \\ & \ddots & \\ & & -x^2_n \end{matrix} & \cdots & 
&\begin{matrix} -x^p_1 & & \\ & \ddots & \\ & & -x^p_n \end{matrix}  
\\
\begin{matrix} y^1_1 & & \\ & \ddots & \\ & & y^1_n \end{matrix} & I_n & & &
\\
\\
\begin{matrix} y^2_1 & & \\ & \ddots & \\ & & y^2_n \end{matrix} &  & I_n & &
\\
\vdots &  &   & \ddots  &
\\
\begin{matrix} y^p_1 & & \\ & \ddots & \\ & & y^p_n \end{matrix} &  &  & &\  \ I_n
\end{pmatrix}.
$$

The Veronese is parametrized by $(x^j)\mapsto (a_0+ x^ja_j)^p$ and the
same matrix as above works replacing $x^j_s$ with $x^j$ for all $s$ and similarly
for $y$.

\section{Definitions of $\vp$, $\vnp$ and $\vp_e$}\label{vpdefssect}

 In the discussion above, the problem presented was far removed from geometry, and it was only after
significant work that geometric objects appeared. L. Valiant \cite{MR564634} has proposed algebraic analogs
of the complexity classes $\p$ and $\np$ in terms of sequences of polynomials. 
Such classes should be closer to geometry, however, 
the properties of the resulting sequences of hypersurfaces relevant for geometry have
yet to be determined. In this expository section, I briefly review the relevant definitions.

\subsection{$\vp_e$}  
An elementary measure of the complexity of 
a (homogeneous) polynomial $p$ is as follows:  given an expression for $p$,
  count the total number of additions plus multiplications
present in the expression, and then   take the minimum over all
possible expressions.

\begin{example}\label{xpyn}
$$
p_n(x,y)=x^n+nx^{n-1}y + \binom n2x^{n-2}y^2+\binom n3x^{n-3}y^3+\cdots +y^n
$$
This expression for $p_n$ involves $n(n+1)$ multiplications and $n$ additions,
but  one can also write
$$
p_n(x,y)=(x+y)^n
$$
which requires  $n$ multiplications and one addition to evaluate.
\end{example}

\begin{definition}\index{arithmetic circuit}
An {\it arithmetic circuit} $C$ is a finite, acyclic, directed graph with vertices of
in-degree $0$ or $2$ and exactly one vertex of out degree $0$.
In degree $0$, inputs are labelled by elements of $\BC\cup\{ x_1\hd x_n\}$
and in degree $2$, vertices are called {\it computation gates} and labelled
with $+$ or $*$. The {\it size} of $C$ is the number of vertices.
From a circuit $C$, one can construct a polynomial $p_C$ in the variables $x_1\hd x_n$.
\end{definition}

If $C$ is a tree (i.e., all out degrees are at most one), then
the size of $C$ equals the number of $+$'s and $*$'s
used in  the formula constructed
from $C$. 

\begin{definition}\label{exprsize}  For $f\in S^d\BC^m$, the
{\it expression size} $E(f)$ is the smallest size of a tree circuit  that computes $f$. 
Define the class $\vp_e$ to be the set of sequences $(p_n)$ such that
there exists a sequence $(C_n)$ of tree circuits, with
the size of $C_n$ bounded by a polynomial in $n$,  such that $C_n$
computes $p_n$.
\end{definition}
 
It turns out that expression size is too na\"ive a measurement of complexity,
as consider  Example \ref{xpyn}, we could first compute
$z=x+y$, then $w=z^2$, then $w^2$ etc... until the exponent is close
to $n$, for a significant savings in computation when $n$ is large.

\subsection{$\vp$, $\vp_{ws}$ and closures}
Circuits more general than trees
allow one to use 
 the results of previous calculation
and gives rise to the class $\vp$:

\begin{definition}\label{defvp}\index{VP}
The class $\vp$ is the set of sequences $(p_n)$ of polynomials of degree $d(n)$ in $v(n)$
variables   where $d(n),v(n)$ are
bounded by   polynomials in $n$ and such that
there exists a sequence of circuits $(C_n)$ of polynomialy bounded size  
such that $C_n$ computes $p_n$.  
\end{definition}

A polynomial $p(y_1\hd y_m)$ is a {\it projection} of $q(x_1\hd x_n)$ if
we can set $x_i=a^s_iy_s+c_i$ for constants $a^s_i,c_i$ to obtain
$p(y_1\hd y_m)=q(a^s_1y_s+c_1\hd a^s_ny_s+c_n)$. Geometrically, if we homogenize the
polynomials by adding   variables $y_0,x_0$, we can study the zero sets in projective
space. Then $p$ is a projection of $q$ iff $\tzeros(p)\subset\BC\pp m$ is a linear
section of $\tzeros (q)\subset \BC\pp n$. This is because if we consider
a projection map $V\ra V/W$, then $(V/W)^*\simeq W\upperp\subset V^*$.

\begin{definition} A sequence $(p_n)$ is {\it hard} for a complexity class $\bold C$
defined by sequences of polynomials,
if for all sequences $(q_m)$ in $\bold C$, $q_m$ can be realized as a projection of $p_{n(m)}$
where the function $n(m)$ is bounded by a polynomial in $m$.
A sequence $(p_n)$ is {\it complete} for $\bold C$ if it is hard for $\bold C$ and if
$(p_n)\in \bold C$.
\end{definition}

A famous example of a   sequence in $\vp$  is $\tdet_n\in S^n\BC^{n^2}$, despite
its apparently huge expression size. While it is known that $(\tdet_n)\in \vp$, it
is not known whether or not it is $\vp$-complete. On the other hand, it is
known that $(\tdet_n)$ is $\vp_e$-hard, although it is not known whether or
not $(\tdet_n)\in \vp_e$. When complexity theorists and mathematicians are
confronted with such a situation, what else do they do other than
make another definition?

\begin{definition}\label{defvpws} 
The class $\vp_{ws}$ is the set of sequences $(p_n)$  where $\tdeg(p_n)$ is
bounded by a polynomial and such that
there exists a sequence of circuits $(C_n)$ of polynomialy bounded size  
such that $C_n$ represents $p_n$, and such that at any multiplication vertex, the component of
the circuit of one
of the two edges coming in is disconnected from the rest of the circuit by
removing the multiplication vertex.  
\end{definition}

In \cite{mapo:04} they show  $(\tdet_n)$ is $\vp_{ws}$-complete, so Conjecture \ref{valpermconj}
may be rephrased as conjecturing $\vp_{ws}\neq\vnp$.

\begin{remark} It is considered a major open question to determine whether or not
$(\tdet_n)\in \vp_e$.\end{remark}

\begin{definition} Given a complexity class $\bold C$ defined in terms of sequences of
polynomials, we define a sequence $(p_n)$ to be in $\ol{\bold C}$ if there exists
a curve of sequences $q_{n,t}$, such that for each fixed $t_0\neq 0$,
$(q_{n,t_0})\in \bold C$ and for all $n$, $\tlim_{t\ra 0}q_{n,t}=p_n$.
\end{definition}

\subsection{$\vnp$}
 
The class $\vnp$  essentially consists of polynomials whose coefficients
can be determined in polynomial time.
Consider a sequence $h=(h_n)\in \BC[x_1\hd x_n]_{\leq n}$ of 
(not necessarily homogeneous) polynomials of the form
\be\label{hneqn}
h_n=\sum_{e\in \{ 0,1\}^n}g_n(e)x_1^{e_1}\cdots x_n^{e_n}
\ene
where $(g_n)\in \vp$.  
Define $\vnp$ to be the set of all sequences
that are projections of sequences of the form $h$.
For equivalent definitions, see  e.g.,  \cite[\S 21.2]{BCS}.

\begin{proposition}\cite{MR564634} $(\tperm_n) \in \vnp$, in fact is $\vnp$-complete.
\end{proposition}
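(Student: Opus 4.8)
The plan is to prove membership and hardness separately, using the definition of $\vnp$ as projections of sequences $h = (h_n)$ of the form \eqref{hneqn} with coefficient sequence $(g_n) \in \vp$. First I would establish $(\tperm_n) \in \vnp$. Writing out the permanent of an $n \times n$ matrix of variables $(x_{ij})$, we have $\tperm_n = \sum_{\sigma \in \mathfrak{S}_n} \prod_{i=1}^n x_{i\sigma(i)}$; the trick is to encode a permutation $\sigma$ by its $0/1$ permutation matrix, i.e. index the sum by $e = (e_{ij}) \in \{0,1\}^{n^2}$ and set the coefficient $g_{n^2}(e)$ to be $1$ if $e$ is a permutation matrix and $0$ otherwise, so that $\tperm_n = \sum_{e \in \{0,1\}^{n^2}} g_{n^2}(e)\, \prod_{i,j} x_{ij}^{e_{ij}}$. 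The point is that testing whether a given $0/1$ matrix is a permutation matrix — each row sums to $1$, each column sums to $1$ — is a polynomial-size arithmetic computation, so $(g_{n^2}) \in \vp$ (indeed the verifier polynomial is a product of linear forms in the $e_{ij}$, which one checks equals $1$ on permutation matrices and $0$ on all other $0/1$ inputs). Hence $\tperm_n$ is literally of the form \eqref{hneqn} up to the reindexing $n \leftrightarrow n^2$, and is in particular a projection of such a sequence, so $(\tperm_n) \in \vnp$.

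Next I would prove $\vnp$-hardness: every sequence in $\vnp$ is a projection of $(\tperm_{n(m)})$ with $n(m)$ polynomially bounded. By the definition of $\vnp$ and the transitivity of projections (a projection of a projection is a projection, with the size parameter composing polynomially), it suffices to show that an arbitrary sequence $h = (h_n)$ of the shape \eqref{hneqn} with $(g_n) \in \vp$ is a projection of the permanent. Here I would invoke Valiant's classical completeness argument: since $(g_n) \in \vp$, it is computed by polynomial-size circuits, and — after the standard reduction of a circuit to a weakly-skew or formula-like normal form, or directly via Valiant's original construction — $g_n$ can be realized as the determinant, and then the permanent, of a matrix whose entries are affine-linear in the variables and in new $0/1$ selector variables, of polynomial size. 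Summing over the Boolean cube $e \in \{0,1\}^n$ as in \eqref{hneqn} is then itself implemented by a further polynomial blow-up of the permanent matrix (this is exactly the step where one uses that the permanent "sums over perfect matchings," so summing a family of permanents indexed by a Boolean cube is again a single permanent). Composing these reductions shows $h_n$, and hence any projection of it, is a projection of $\tperm_{n(m)}$.

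The main obstacle is the hardness direction, specifically the passage from "$(g_n) \in \vp$" to "$g_n$ is a polynomial-size projection of the permanent." Membership is essentially bookkeeping, but hardness requires the full strength of Valiant's simulation of arithmetic circuits by permanents — reducing a general circuit to a branching-program / iterated-matrix-product form and then expressing that as a permanent — together with careful tracking that all the auxiliary $0/1$ "summation" variables from \eqref{hneqn} can be absorbed without breaking polynomiality. Since this is Valiant's theorem \cite{MR564634} and the excerpt explicitly permits citing it, in an expository account one would state the membership construction in detail as above and refer to \cite{MR564634} or \cite[\S 21.2]{BCS} for the hardness reduction rather than reproducing it.
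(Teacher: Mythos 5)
The paper offers no proof of this proposition at all---it is stated purely as a citation to Valiant \cite{MR564634}---and your sketch is the standard argument: membership by taking the coefficient function $g$ in \eqref{hneqn} to be the permutation-matrix indicator on $\{0,1\}^{n^2}$, and hardness deferred to Valiant's circuit-to-permanent simulation, which is exactly the level of detail the paper itself (and \cite[\S 21.2]{BCS}) intends. The one inaccuracy is your parenthetical claim that the verifier polynomial is a \emph{product of linear forms}: a product of linear forms cannot separate ``exactly one $1$ per row and column'' from other $0/1$ configurations (e.g.\ $\prod_i(\sum_j e_{ij})\prod_j(\sum_i e_{ij})$ is nonzero on the all-ones matrix); the standard verifier is $\prod_i\bigl(\sum_j e_{ij}\prod_{k\neq j}(1-e_{ik})\bigr)$ times the analogous column product, which is a sum-of-products expression of polynomial formula size, so the needed conclusion $(g_{n^2})\in\vp_e\subset\vp$ still holds.
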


\begin{conjecture}\cite{vali:79-3}   [Valiant's hypothesis] $\vp\neq\vnp$.
\end{conjecture}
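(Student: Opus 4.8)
The target is the algebraic analogue of $\p\neq\np$, so I should be candid: no proof is known, and what follows is a plan of attack rather than an argument I can complete. The plan is to show $(\tperm_m)\notin\vp$, which by the Proposition above — $(\tperm_m)$ is $\vnp$-complete and $\vp$ is closed under polynomial projections — is equivalent to $\vp\neq\vnp$. Concretely I would fix a functional $\mu$ on polynomials and prove two inequalities: an \emph{upper bound} $\mu(p_n)\le\mathrm{poly}(n)$ for every $(p_n)\in\vp$, obtained by induction on the size of a circuit computing $p_n$ using only the structure of its $+$ and $*$ gates of fan-in two (after the usual homogenization and degree reduction that keep the circuit polynomially bounded), and a \emph{lower bound} $\mu(\tperm_m)\ge m^{\omega(1)}$ read off from the combinatorics of perfect matchings of $K_{m,m}$. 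Any $\mu$ satisfying both finishes the proof.

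The first concrete choice I would try is a measure that is manifestly small on $\vp$: the dimension of the linear span of the partial derivatives of all orders, or a shifted/skewed variant, or — in the spirit of \S\ref{MSsect} — the list of multiplicities of irreducible $GL_{n^2}$-modules in the homogeneous coordinate ring of the orbit closure $\overline{GL_{m^2}\cdot(\ell^{\,n-m}\tperm_m)}\subset\BP(S^n\BC^{n^2})$, compared term-by-term with the corresponding multiplicities for $\overline{GL_{n^2}\cdot\tdet_n}$; an irreducible module occurring for the permanent orbit closure but \emph{not} for the determinant orbit closure, for infinitely many pairs $(m,n)$ with $n$ polynomial in $m$, would be an obstruction ruling out a small determinantal representation, which by the universality of the determinant for $\vp_{ws}$ (\cite{mapo:04}, Conjecture \ref{valpermconj}) is essentially what is needed. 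Along the way I would first try to settle the weaker Conjecture \ref{valpermconj} by improving the Mignon--Ressayre bound \cite{MR2126826}, of order $m^2$, to a super-polynomial one, presumably by isolating a local differential-geometric invariant (cf.\ Theorem \ref{detdiffinvars}) that distinguishes $\{\tperm_m=0\}$ from a generic hypersurface and degrades badly under linear sections of $\{\tdet_n=0\}$.

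The hard part — the reason this is the central open problem of the area rather than an exercise — is reconciling the two inequalities. Every efficiently describable, combinatorially natural measure that is genuinely large on $\tperm_m$ has so far also turned out to be large on random polynomials of the same degree, and frequently on $\tdet_n$ itself, so it violates the upper-bound requirement; this is the algebraic shadow of the ``natural proofs'' barrier. Conversely, the measures that provably stay polynomial on all of $\vp$ — the ones behind known $\vp_e$- and $\vp_{ws}$-type lower bounds, and behind Theorem \ref{detdiffinvars} — have so far yielded only polynomial, not super-polynomial, separations. A realistic deliverable is therefore not a complete proof but a clean reduction of the conjecture to a statement about plethysm/Kronecker coefficients (the GCT route of \cite{MS1,MS2,BLMW}) or about the differential geometry of the permanent hypersurface versus the determinant hypersurface, as sketched in \S\ref{valsect} and \S\ref{beyondgctsect}; actually establishing either such statement is where I expect the true obstacle to lie.
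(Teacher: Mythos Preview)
The statement you were asked to prove is a \emph{conjecture}, not a theorem: the paper states Valiant's hypothesis $\vp\neq\vnp$ as an open problem and offers no proof of it anywhere. You recognized this immediately and said so explicitly, which is the correct response; there is no ``paper's own proof'' to compare against, and any claimed complete proof would have been wrong. Your research outline --- reduce to $(\tperm_m)\notin\vp$ via $\vnp$-completeness, then seek either a representation-theoretic obstruction in the coordinate rings of the orbit closures (the GCT route of \S\ref{MSsect}) or a differential-geometric separation of the permanent and determinant hypersurfaces (the Mignon--Ressayre route of \S\ref{valsect}) --- is exactly the landscape the paper itself surveys, so in that sense your proposal is well aligned with the text.

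One point worth sharpening: you slide between $\vp$ and $\vp_{ws}$. The determinant is known to be complete only for $\vp_{ws}$ (Definition \ref{defvpws} and the Malod--Portier result cited just after it), not for $\vp$; whether $(\tdet_n)$ is $\vp$-complete is open. Hence a super-polynomial lower bound on $dc(\tperm_m)$, or a GCT-style module obstruction against $\overline{GL_{n^2}\cdot[\tdet_n]}$, would establish Conjecture \ref{valpermconj} (equivalently $\vp_{ws}\neq\vnp$, or in the orbit-closure formulation $\overline{\vp_{ws}}\neq\vnp$), not the full $\vp\neq\vnp$. Your phrase ``is essentially what is needed'' papers over this gap; a genuine attack on $\vp\neq\vnp$ via the determinant would first have to show $(\tdet_n)$ is $\vp$-complete, or else work directly with circuits rather than with determinantal representations.
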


It is known that  $\p\neq \np$ would imply $\vp\neq \vnp$ over finite fields.

\section{Projecting the determinant to the permanent}\label{valsect}

\subsection{Complexity of $(\tdet_n)$}\label{gausselimsect}
For a vector space $V$, let $S^dV$ denote the space of homogeneous polynomials
of degree $d$ on the dual space $V^*$.
Let $E,F=\BC^n$, and let 
  $E\ot F$ denote  the space of linear maps $E^*\ra F$. The polynomial
$\tdet_n\in \La nE\ot \La nF\subset S^n(E\ot F)$ is the unique up to scale
(nonzero)
element of the one-dimensional vector space $\La nE\ot \La nF$.
%(To fix the scale, one fixes volume forms $\Omega_E,\Omega_F$ on
%$E^*,F^*$ and requires $\tdet_n(\Omega_E\ot \Omega_F)=1$.) 
  $\tdet_n$ is
invariant under the action of 
$SL(E)\times SL(F)$, as $\tdet(axb)=\tdet(a)\tdet(x)\tdet(b)$. Fix bases in $E,F$, so we may identify
$E\ot F$ with the space of $n\times n$ matrices and
$SL(E)$ as the subgroup of all $n\times n$ matrices with
determinant one.
If $x\in E^*\ot F^*$ is expressed
as a matrix, letting $\FS_n$ denote the permutation group on $n$ elements,  then
$$
\tdet_n(x)=\sum_{\s\in \FS_n} \sign (\s) x^1_{\s(1)}\hd x^n_{\s(n)}.
$$

  In the   na\"ive computation of $\tdet_n$ with this formula, one uses
$(n-1)(n!)$ multiplications and $n!-1$ additions.   Nevertheless, one has
the essentially classical:

\begin{proposition} \label{detinvp}$(\tdet_n)\in \vp$. More precisely,  $\tdet_n$ can be evaluated by performing $\cO(n^4)$
arithmetic operations.
\end{proposition}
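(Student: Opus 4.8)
The plan is to establish that Gaussian elimination computes $\tdet_n$ using $\cO(n^4)$ arithmetic operations over $\BC$. The key point is that row reduction, which involves only additions, subtractions, multiplications and divisions, transforms a matrix into upper-triangular form without changing the determinant (up to a recorded sign and the pivots divided out), and the determinant of an upper-triangular matrix is the product of its diagonal entries.

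\begin{proof}
We use Gaussian elimination. Given $x = (x^i_j) \in E^* \ot F^*$ written as an $n \times n$ matrix, perform the following. For $k = 1 \hd n-1$: if the pivot entry $x^k_k$ is nonzero, then for each row $i$ with $k < i \leq n$, subtract $(x^i_k / x^k_k)$ times row $k$ from row $i$. Each such row operation requires computing one quotient and then, for the $n-k$ entries to the right of the pivot, one multiplication and one subtraction, hence $\cO(n)$ operations; there are $n-k$ rows to clear at stage $k$, so stage $k$ costs $\cO(n^2)$ operations, and the total over all $k$ is $\cO(n^3)$ operations. (If some pivot vanishes, swap in a lower row with a nonzero entry in column $k$; if the entire column below is zero, the matrix is singular and $\tdet_n(x) = 0$.) Since adding a multiple of one row to another leaves the determinant unchanged and a row swap changes it by $-1$, after this process the matrix is upper-triangular and $\tdet_n(x)$ equals $(-1)^{(\# \text{swaps})}$ times the product of the resulting diagonal entries, which costs a further $n-1$ multiplications. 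Hence $\tdet_n$ is evaluated in $\cO(n^3)$ arithmetic operations, and in particular $(\tdet_n) \in \vp$.
\end{proof}

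\begin{remark}
The bound $\cO(n^3)$ above is in fact sharper than the claimed $\cO(n^4)$; the discrepancy is harmless, and the looser exponent is what matters for membership in $\vp$. One caveat deserves comment: the definition of $\vp$ in Definition \ref{defvp} is stated for circuits built from $+$ and $*$ gates, i.e. \emph{without divisions}, whereas naive Gaussian elimination uses divisions by the pivots. The standard remedy (Strassen's elimination of divisions, see e.g. \cite[\S 7.1]{BCS}) converts a division-free computation of a quotient of polynomials, where the denominator is a polynomial with nonzero constant term after a suitable shift, into a division-free one at polynomial cost; applied here it yields a genuinely division-free circuit of size $\cO(n^4)$ computing $\tdet_n$, which is the origin of the stated exponent. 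This is the only step requiring care; everything else is the routine operation count above.
\end{remark}
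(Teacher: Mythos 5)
Your proposal is correct and takes essentially the same route as the paper, which also proves this via Gaussian elimination (phrased there as acting by the group generated by the unipotent upper-triangular matrices and the permutation matrices) and defers the division-free, worst-case version to the literature. Your remark correctly identifies the one subtle point — that the circuit model of Definition \ref{defvp} forbids divisions, so Strassen's elimination of divisions is needed and is precisely what inflates the naive $\cO(n^3)$ count to the stated $\cO(n^4)$ — which the paper handles only by citation.
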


Fixing bases of $E,F$ and identifying $E^*\ot F^*$ with the space of $n\times n$ matrices, there are
  subspaces   of  $E^*\ot F^*$ on which  $\tdet$ can be evaluated
by performing $n$ arithmetic operations, for example the 
upper-triangular matrices which we will denote by $\fb$.  

 $\tdet_n$ is invariant under the
action of  the subgroup $U\subset SL(E)$ of all upper-triangular
matrices with $1$'s on the diagonal as well as the group   $\cW$ of permutation matrices
in $SL(E)$.

  Proposition \ref{detinvp} essentially  follows from:

\begin{proposition}[Gaussian  elimination] Notations as above, given $x\in E^*\ot F^*$, there exists $g$ in the group generated
by $U$ and $\cW$ such 
that $g\cdot x\in \fb$. Such a $g$ can be computed by performing a number of
arithmetic operations that is polynomial in $n=\tdim E$.
\end{proposition}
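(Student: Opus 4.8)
The plan is to recognize this as the classical statement that Gaussian elimination converts an arbitrary square matrix to upper-triangular form using row operations of the permitted two types, with a count of the arithmetic cost. First I would fix bases of $E$, $F$ and work with $x \in E^*\ot F^*$ as an $n\times n$ matrix; recall that the group $U$ of unipotent upper-triangular matrices acts on the left by adding a scalar multiple of one row to a row strictly above it (i.e., the elementary row operations $R_i \mapsto R_i + \lambda R_j$ with $i<j$), and that the permutation group $\cW \subset SL(E)$ acts by permuting rows (with a sign adjustment so the matrix has determinant $+1$, which does not affect the triangularity of $g\cdot x$). I would then proceed by induction on $n$: to clear the first column, if the $(1,1)$ entry is zero but some entry $x^j_1$ with $j>1$ is nonzero, apply a permutation in $\cW$ to bring a nonzero entry into position $(1,1)$; if the entire first column is zero, it is already in the desired shape and we pass to the $(n-1)\times(n-1)$ submatrix. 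Once $x^1_1 \neq 0$, for each $i = 2,\dots,n$ apply the element of $U$ effecting $R_i \mapsto R_i - (x^i_1/x^1_1)R_1$, which zeroes out the rest of the first column. After this step the matrix has the block form $\begin{pmatrix} x^1_1 & * \\ 0 & x' \end{pmatrix}$ with $x'$ an $(n-1)\times(n-1)$ matrix, and by the inductive hypothesis there is $g'$ in the group generated by the corresponding $U$, $\cW$ of the smaller problem — which embeds into the group generated by $U$ and $\cW$ for size $n$, acting only on rows $2,\dots,n$ — bringing $x'$ to upper-triangular form. Composing gives the desired $g$, and $g\cdot x \in \fb$.

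For the complexity count, clearing column $k$ of the current $(n-k+1)\times(n-k+1)$ submatrix requires at most one permutation search (comparisons, no arithmetic), then for each of the $\le n-k$ remaining rows: one division to form the multiplier and $\le n-k$ multiply-and-subtract pairs to update that row. This is $O((n-k)^2)$ arithmetic operations at stage $k$, so summing $k=1$ to $n$ gives $O(n^3)$ operations to produce $g\cdot x$; tracking the product $g$ itself (as a matrix, by accumulating the elementary factors) costs at most another $O(n^3)$. In any case the total is polynomial in $n$, which is all that is claimed; this is of course the source of the $\cO(n^4)$ (or better) bound in Proposition \ref{detinvp}, since once $x$ is triangularized $\tdet$ is the product of the diagonal entries, computable in $n-1$ multiplications.

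I do not expect any genuine obstacle: the only points requiring a word of care are (i) confirming that the two permitted operation types — left multiplication by $U$ and by $\cW$ — suffice, i.e., that one never needs to scale a row by a nonzero constant (true, because triangularity does not require normalized pivots, so we only divide to compute multipliers, never to rescale the matrix), and (ii) handling the degenerate case of a vanishing pivot column, which the induction absorbs cleanly as noted above. The bookkeeping for the sign correction keeping permutation matrices inside $SL(E)$ rather than $GL(E)$ is immaterial since multiplying a row by $-1$ — which one could fold into $\cW$ or simply absorb at the end — does not disturb the conclusion $g\cdot x \in \fb$.
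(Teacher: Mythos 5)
Your argument is correct in substance and is the standard pivoted-elimination induction with an $O(n^3)$ operation count; the paper itself does not actually write this proof out --- it only remarks that for generic $x$ the algorithm is clear using $U$ alone and otherwise defers to the literature (Mahajan--Vinay and B\"urgisser et al.) for a division-free algorithm valid for arbitrary matrices --- so your self-contained write-up is more complete than what appears in the text. One bookkeeping point needs repair, though, because as written your proof is internally inconsistent: you first (correctly, for left multiplication by unipotent \emph{upper}-triangular matrices) describe the operations available from $U$ as $R_i\mapsto R_i+\lambda R_j$ with $i<j$, but the elimination step you then apply, $R_i\mapsto R_i-(x^i_1/x^1_1)R_1$ for $i\ge 2$, has $i>j$ and is therefore a \emph{lower} unipotent transvection, not an element of $U$ under your own convention. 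This does not break the proof: conjugating such a transvection by the transposition in $\cW$ exchanging rows $i$ and $j$ produces an element of $U$, so every elementary transvection lies in the group generated by $U$ and $\cW$ (indeed $\langle U,\cW\rangle\supseteq\langle U,U^-\rangle=SL(E)$), and each such conjugation costs $O(1)$ extra operations, leaving the complexity bound intact. You should either insert that one-line remark or adopt the paper's implicit convention for the action of $SL(E)$ on $E^*\ot F^*$ (which is dual to the naive one, explaining why the paper can say $U$ alone suffices generically). Your closing observation tying this to Proposition \ref{detinvp} matches the paper's intent, with the caveat that the division by pivots in your algorithm is why the paper points to the division-free references for the general statement.
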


\begin{proof}[proof of Prop. \ref{detinvp}] For sufficiently generic matrices the algorithm is clear and just using
$U$ is sufficient. For an algorithm
that works for arbitrary matrices, see, e.g.,  \cite{MR1479636,mapo:04}.
\end{proof}

\subsection{The permanent}
Define the permanent $\tperm_n\in S^n(E\ot F)$ to be the unique
up to scale
element of $S^nE\ot S^nF\subset S^n(E\ot F)$ invariant
under the action of the diagonal matrices and permutation
matrices acting on both the left and the right (i.e. the normalizers
of the tori in $SL(E)\times SL(F)$). 
%(To fix the scale,
%one can choose bases in $E,F$ and use them to identify $E^*\simeq F$, and
%then require
%$\tperm_n(Id_n)=1$, although it is possible to fix the scale fixing less information.)
If $x\in E^*\ot F^*$ is expressed
as a matrix, then
$$
\tperm_n(x)=\sum_{\s\in \FS_n} x^1_{\s(1)}\hd x^n_{\s(n)}.
$$

\subsection{The permanent as a projection of the determinant}

\begin{theorem}\label{pexpressthm}[Valiant]\cite{vali:82} Every $f\in \BC[x_1\hd x_n]$ of expression
size (see \S\ref{exprsize}) $u$ is both a projection of $\tdet_{u+3}$ and $\tperm_{u+3}$.
\end{theorem}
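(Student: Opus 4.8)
The plan is to proceed by induction on the expression size $u$ of $f$, building, alongside the projection, a determinantal (resp. permanental) representation of matrices of controlled size, with a small overhead per gate. The natural vehicle is the observation that the determinant of a matrix in block-triangular form, or more precisely of a matrix that is upper-triangular with $1$'s on the diagonal except for a single designated off-diagonal entry, equals that entry; more generally, one can encode a polynomial $p$ as the determinant of a matrix $M_p$ whose entries are affine-linear in the variables, with one distinguished \emph{source} and \emph{sink} position, such that $\tdet(M_p)$ (with the rest of the diagonal set to $1$ and a suitable triangular shape) reads off $p$ along ``paths'' from source to sink. This is the standard Valiant ``graph/matrix gadget'' picture, and it is cleanest to phrase it in terms of weighted directed graphs whose path-sums (for the permanent) or signed cycle-covers (for the determinant) compute $p$.

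The key steps are as follows. First I would set up the base case: a single variable $x_i$ or constant $c$ is computed by a tree of size $1$, and is the $(1,1)$-entry, hence the determinant, of a $1\times 1$ matrix; more usefully, I record the inductive invariant in the form ``$p$ with expression size $u$ is realized as $\tdet$ of an $(u+1)\times(u+1)$ (or similar) matrix in the gadget normal form.'' Second, the addition gate: given gadgets for $g$ and $h$, one glues them in series or parallel so that the path-sums add, at the cost of identifying a source with a sink, which increases the matrix size by the sizes of the two pieces minus an overlap — here one must be careful to get the additive constant right so that it telescopes to $u+3$ rather than something larger. Third, the multiplication gate: one concatenates the two gadgets in series so that path-sums multiply; again the size bookkeeping is the crucial point. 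Fourth, one checks that the whole construction is a \emph{projection} in the precise sense of \S\ref{vpdefssect}: every entry of the final matrix is either a constant or one of the variables $x_j$ (up to the affine substitutions allowed), and the off-diagonal ``wiring'' entries are constants $0,1$. Fifth, for the permanent one runs the identical construction but must kill the signs: this is done by the usual trick of replacing each edge-weight $a$ by $\pm a$ according to a consistent rule, or equivalently by noting that in the gadget normal form the only cycle covers that contribute are the self-loops on the diagonal together with a single source-to-sink path closed up by a back-edge, so the sign is globally $+1$ after an appropriate choice of the back-edge weight.

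The main obstacle I expect is not the existence of the gadgets — that is routine — but the \emph{sharp} size bound $u+3$. Getting from ``polynomial-size'' or ``$O(u)$'' down to exactly $u+3$ requires choosing the normal form so that an addition gate costs exactly one extra row/column and a multiplication gate also costs exactly one, with the ``$+3$'' accounting for the initial source row, the final sink row, and the homogenizing/closing edge. This forces a somewhat rigid choice of gadget (essentially the ``projection of $\det$'' building blocks used by Valiant), and verifying that the induction preserves the normal form exactly — in particular that series/parallel composition does not create a situation requiring an extra dummy row — is the delicate part. A secondary subtlety is that $f$ need not be homogeneous, so one works with affine-linear entries throughout and only at the end, if desired, homogenizes by the extra variable; this interacts with the ``$+3$'' count and should be checked.

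I would expect the cleanest exposition to cite the graph-theoretic formulation: a polynomial of formula size $u$ is the sum over $s$-$t$ paths (weighted by edge labels) in an acyclic weighted graph with $u+1$ vertices, and then Valiant's lemma that such a path-sum is a projection of $\tdet_{u+3}$ (add a self-loop of weight $1$ at every vertex and a weight-$1$ edge $t\to s$, so that the unique nontrivial cycle cover is ``path $+$ loops'' and the determinant equals the path-sum up to a sign that is absorbed); the permanent case is then immediate since all these cycle covers have sign $+1$ to begin with.
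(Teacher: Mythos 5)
The paper does not actually prove this theorem -- it is stated with a citation to Valiant and illustrated only by the $5\times 5$ example that follows -- so there is no internal proof to compare against. Your outline is essentially Valiant's own argument (formula $\to$ weighted $s$--$t$ DAG whose path-sum is $f$ $\to$ cycle-cover expansion of $\tdet$/$\tperm$ of the matrix with unit self-loops and a back-edge $t\to s$), and the overall architecture is right. But one step, as you state it, would fail. In the cycle-cover expansion, the cover corresponding to an $s$--$t$ path with $\ell$ edges is a single cycle of length $\ell+1$ together with fixed points, and its sign is $(-1)^{\ell}$. This sign therefore varies from path to path whenever the DAG has $s$--$t$ paths of different lengths -- which happens as soon as the formula mixes addition and multiplication at different depths (already for $x+yz$, where addition is parallel composition and multiplication is series composition, the two branches give paths of lengths $1$ and $2$). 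Consequently your claim that ``the sign is globally $+1$ after an appropriate choice of the back-edge weight'' is false in general: a single scalar on the back edge cannot correct path-dependent signs. The correct fix is the other alternative you mention in passing: negate \emph{every} edge weight of the DAG, so each path acquires an extra $(-1)^{\ell}$ that cancels the cycle-cover sign (or, alternatively, pad the DAG so all $s$--$t$ paths have equal length, at a cost in size that must then be charged against the $u+3$ bound). For the permanent no such correction is needed, as you say.

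The second gap is that the quantitative content of the theorem -- the bound $u+3$ rather than $O(u)$ -- is exactly the part you defer as ``the delicate part.'' The induction must specify a normal form in which a leaf costs a fixed number of vertices and each internal $+$ or $*$ gate costs exactly one additional vertex, and must verify that parallel composition (identifying both endpoints of the two sub-gadgets) and series composition (identifying the sink of one with the source of the other) preserve that normal form without dummy vertices; the ``$+3$'' then comes from the source, the sink, and the closing structure. Without carrying this out, the proposal establishes only that $f$ is a projection of $\tdet_N$ and $\tperm_N$ for some $N$ linear in $u$, not the stated $N=u+3$. Since the matrix entries produced are constants or single variables, the ``projection'' requirement of \S\ref{vpdefssect} is indeed satisfied, so that part of your argument is fine.
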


In particular,  any polynomial is the projection of some
determinant.

\begin{example}Let  $f(x)=x_1x_2x_3+x_4x_5x_6$, then
$$
f(x)=
\tdet
\begin{pmatrix}
0&x_1&0&x_4&0\\
0&1&x_2&0&0\\
x_3&0&1&0&0\\
0&0&0&1&x_5\\
x_6&0&0&0&1
\end{pmatrix}.
$$
\end{example}

\begin{conjecture}[Valiant]\label{valpermconj}\cite{MR564634}  Let $dc(\tperm_m)$ be the smallest integer
$n$ such that  $\tperm_m$ can  be realized
as a  projection of $\tdet_{n}$. Then    $dc(\tperm_m)$  grows faster
than any polynomial in $m$.  
\end{conjecture}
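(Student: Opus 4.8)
This is one of the central open problems of algebraic complexity theory, so what follows is a \emph{program} rather than a proof, organized around the two circles of ideas emphasized in the introduction; producing the required \emph{super-polynomial} gap is precisely what remains to be done.

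\textbf{Local differential geometry (after Mignon--Ressayre \cite{MR2126826}).} Suppose $\tperm_m$ is a projection of $\tdet_n$, say $\tperm_m=\tdet_n\circ L$ for an affine-linear $L\colon \BC^{m^2}\to\BC^{n^2}$. Then every local differential invariant of the hypersurface $\{\tperm_m=0\}$ at a smooth point must be pulled back, through $L$, from the corresponding invariant of $\{\tdet_n=0\}$ at the image point. The plan is: (i) at a carefully chosen $x$ with $\tperm_m(x)=0$, compute a lower bound for some differential invariant --- the rank of the second fundamental form, or, more ambitiously, one of the higher-order invariants catalogued in Theorem \ref{detdiffinvars} --- and show it grows super-polynomially in $m$; (ii) using the very special structure of $\tdet_n$ (again Theorem \ref{detdiffinvars}), bound the \emph{same} invariant at \emph{every} smooth point of $\{\tdet_n=0\}$ by a polynomial in $n$; (iii) conclude $n$ is super-polynomial in $m$. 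With present technology steps (i)--(ii) are carried out only for the Hessian, which yields the polynomial bound $n\geq m^2/2$. The obstacle, as the introduction stresses, is that the local differential geometry of $\{\tperm_m=0\}$ is not yet distinguishable from that of a \emph{generic} hypersurface of the same degree and dimension --- and for a generic hypersurface these invariants are large, so a crude bound on the permanent side is useless; one needs an invariant that is \emph{provably} large for $\tperm_m$ and \emph{provably} small for $\tdet_n$.

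\textbf{Geometric complexity theory (after Mulmuley--Sohoni, \S\ref{MSsect}).} Pad degrees by a linear form $\ell$ and pass from $\tdet_n$ and $\ell^{n-m}\tperm_m$ to their $GL_{n^2}$-orbit closures $X_{\tdet_n}$ and $X_{\tperm_m,n}$ in $\BP(S^n\BC^{n^2})$; since projections of $\tdet_n$ correspond to linear sections of $\{\tdet_n=0\}$, an inclusion $X_{\tperm_m,n}\subset X_{\tdet_n}$ would follow from $dc(\tperm_m)\leq n$. Both are $G=GL_{n^2}$-varieties, so a containment forces a $G$-module surjection $\BC[X_{\tdet_n}]\twoheadrightarrow\BC[X_{\tperm_m,n}]$. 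It therefore suffices to produce, for every $n$ bounded by a polynomial in $m$, an irreducible $G$-module $S_\lambda\BC^{n^2}$ (an \emph{obstruction}) occurring in $\BC[X_{\tperm_m,n}]$ but not in $\BC[X_{\tdet_n}]$. The route is to exploit the stabilizers --- $\tdet_n$ is fixed by a finite extension of $SL(E)\times SL(F)$, the padded permanent by a far smaller group --- to get, via Schur--Weyl duality and plethysm, handles on the two coordinate rings, and then to exhibit the separating partition $\lambda=\lambda(m,n)$. The hard part --- and the reason the conjecture is open --- is the plethysm bookkeeping: bounding from below the multiplicity of $S_\lambda\BC^{n^2}$ in $\BC[X_{\tperm_m,n}]$ while showing it vanishes in $\BC[X_{\tdet_n}]$ is a family of plethysm problems about $\operatorname{Sym}(S^n\BC^{n^2})$ and its orbit-closure quotient that is currently out of reach; moreover ``occurrence'' obstructions (multiplicity zero versus positive) are now understood to be too weak, so one is forced into the more delicate territory of \emph{multiplicity} obstructions.

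In either approach the conjecture is reduced to a sharp quantitative question --- a super-polynomial separation of differential invariants, or an effective lower bound in a plethysm computation --- and bridging that quantitative gap is the single obstacle that remains.
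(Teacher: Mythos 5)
The statement you were asked about is Valiant's conjecture, a major open problem; the paper states it as a conjecture and offers no proof, so there is nothing for your argument to be checked against, and you are right not to claim one. Your survey of the two known lines of attack --- the Mignon--Ressayre lower bound via differential invariants of the hypersurfaces, and the Mulmuley--Sohoni orbit-closure/representation-theoretic program --- accurately reflects the paper's own discussion in \S\ref{valsect} and \S\ref{MSsect}, including the correct identification of where each approach currently stalls (distinguishing the local geometry of $\{\tperm_m=0\}$ from a generic hypersurface, and the intractable plethysm/Kronecker computations).
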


\subsection{Differential invariants of $\tdet_n$}
This subsection discusses preliminary results of work with D. The and L. Manivel.

Let $X\subset \pp n$ and $Y\subset\pp m$ be varieties such that there is a linear
space $L\simeq \pp m\subset \pp n$ such that $Y=X\cap L$.

Say   $y\in Y=X\cap L$. Then the differential invariants of
$X$ at $y$ will project to the differential invariants of $Y$ at $y$. A definition
of differential invariants adequate for this discussion   (assuming
$X$, $Y$ are hypersurfaces) is as follows:   choose local coordinates $(x^1\hd x^{n+1})$ for
$\pp n$ at $x=(0\hd 0)\in X$ such that $T_xX=\langle \frp{x^1}\hd \frp{x^n}\rangle$ and
expand out a Taylor series for $X$:
$$
x^{n+1}= r^2_{i,j}x^ix^j+ r^3_{i,j,k}x^ix^jx^k+\cdots
$$
The zero set   of $(r^2_{ij}dx^i\circ dx^j\hd r^k_{i_1\hd i_k}dx^{i_1}\ccdots dx^{i_k})$ 
in $\BP T_xX$ is
independent of choices.  I will refer to the polynomials
$F_{\ell,x}(X)$ although they are not well defined individually.
For more details see, e.g. \cite[Chap. 3]{IvL}.

One says that $X$ can approximate $Y$ to $k$-th order at $x\in X$ mapping to
$y\in Y$ if one can project the differential invariants to order
$k$ of $X$ at $x$ to those of $Y$ at $y$.

In \cite{MR2126826} it was shown that the determinant can approximate {\it any} polynomial
to second order if $n\geq \frac{m^2}2$ and that $\tperm_m$ is generic to order two, giving the lower
bound   $dc(\tperm_m)\geq \frac{m^2}2$. The previous lower bound  was $dc(\tperm_m)\geq \sqrt{2} m$ due to
J. Cai \cite{MR1032157} building on work of J.  von zur Gathen \cite{MR910987}.

One can ask what happens at higher orders. 

\medskip

If $X\subset \BP V$ is a quasi-homogeneous variety, i.e.,
a group $G$ acts linearly on 
$V$  and 
$X=\ol{G\cdot [v]}$ for some $[v]\in \BP V$, then
$T_{[v]}X$ is a $\fg([v])$-module, where $\fg([v])$ denotes the Lie algebra
of
the stabilizer of $[v]$ in $G$.

Let $e^1\hd e^n$ be a basis of $E^*$ and $f^1\hd f^n$ a basis of
$F^*$, let $v=e^1\ot f^1+\cdots + e^{n-1}\ot f^{n-1}$, so
$[v]\in \tzeros (\tdet_n)$ and $\tzeros(\tdet_n)=\ol{SL(E)\times SL(F)
\cdot [v]}$.

Write $E'=v(F)\subset E^*$, $F'=v(E)\subset F^*$ and set
$\ell_E=E^*/E'$, $\ell_F=F^*/F'$. Then, using $v$ to identify
$F'\simeq (E')^*$, one obtains 
$T_{[v]}\tzeros(\tdet_n)= \ell_E\ot F' \op (F')^*\ot F'\op (F')^*\ot \ell_F$
as a $\fg([v])-module$. Write an element of $T_{[v]}\tzeros(\tdet_n)$
as a triple $(x,A,y)$. In matrices,
$$
v=\begin{pmatrix} 1& & &  \\  & \ddots &  &  \\   & & 1&  \\   & & & 0\end{pmatrix},
\ \ \ \ 
T_{[v]}\sim \begin{pmatrix} A& y \\ x& 0\end{pmatrix}
$$

Taking the $\fg([v])$-module structure into account, it
is 
 straight-forward to show:

\begin{theorem}\label{detdiffinvars} Let $X=\tzeros(\tdet_n)\subset \pp{n^2-1}=\BP (E\ot F)$,   let $v=
 e_1\ot f_1+\cdots + e_{n-1}\ot f_{n-1} \in X$. With the notations above,
there exist bases in which  the differential invariants
of $X$ at $[v]$
are the polynomials
\begin{align*}
 F_{2,[v]}(X) &=  xy \\
 F_{3,x}(X) &=  xAy \\
&\vdots\\
  F_{k,x}(X) &=   xA^{k-2}y.
\end{align*}
\end{theorem}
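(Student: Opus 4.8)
The plan is to compute the Taylor expansion of the hypersurface $X=\tzeros(\tdet_n)$ in the affine chart centered at $[v]$ directly, exploiting the block decomposition $T_{[v]}X \simeq \begin{pmatrix} A & y \\ x & 0\end{pmatrix}$ and the fact that the normal direction is recorded by the bottom-right entry. Concretely, parametrize a neighborhood of $v$ inside $E\ot F$ by matrices of the form
\[
M(x,A,y,t)=\begin{pmatrix} I_{n-1}+A & y \\ x & t \end{pmatrix},
\]
where $(x,A,y)$ are coordinates on $T_{[v]}X$ and $t$ is the single ``normal'' coordinate. The defining equation $\tdet_n = 0$ then implicitly determines $t$ as a function of $(x,A,y)$ vanishing to second order, and the differential invariants $F_{k,[v]}(X)$ are by definition (up to the ambiguity flagged in the text) the successive homogeneous terms of that function.

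First I would expand $\tdet_n(M(x,A,y,t))$ by cofactor expansion along the last row and column. Writing $B = I_{n-1}+A$, one gets
\[
\tdet_n(M) = t\,\tdet(B) - {}^t x\, \mathrm{adj}(B)\, y,
\]
so solving $\tdet_n(M)=0$ for $t$ (valid since $\tdet(B)$ is a unit near $A=0$) yields
\[
t = \frac{{}^t x\, \mathrm{adj}(B)\, y}{\tdet(B)} = {}^t x\, B^{-1} y = {}^t x\,(I_{n-1}+A)^{-1} y.
\]
Second, I would expand the resolvent: $(I_{n-1}+A)^{-1} = \sum_{j\geq 0} (-A)^j = I - A + A^2 - \cdots$, which gives
\[
t = {}^t x\, y - {}^t x A\, y + {}^t x A^2 y - \cdots = \sum_{k\geq 2} (-1)^k\, {}^t x A^{k-2} y.
\]
The degree-$k$ homogeneous piece (assigning $x,y$ degree one and $A$ degree one) is exactly $\pm\, xA^{k-2}y$, which is the claimed $F_{k,[v]}(X)$; the sign is immaterial since, as the text emphasizes, these polynomials are only well defined up to scale and up to adding multiples of lower-order invariants. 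A small bookkeeping point: one should check that the grading used to separate ``orders'' of the differential invariants matches the one in which $x,y,A$ all have weight one — this is forced by the $\fg([v])$-module structure, since the stabilizer torus acts with the appropriate weights, so the homogeneous components of $t$ are genuinely the successive fundamental forms.

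The only real obstacle is the already-advertised one: pinning down that the naive Taylor coefficients in this particular affine chart actually \emph{are} the projective differential invariants $F_{k,[v]}(X)$, i.e. that the chart $M(x,A,y,t)$ realizes $T_{[v]}X$ and the normal line correctly and that no reparametrization mixes in unwanted terms. This is where the quasi-homogeneity of $X$ does the work: because $X = \ol{SL(E)\times SL(F)\cdot[v]}$, the tangent space is a $\fg([v])$-module and the differential invariants are $\fg([v])$-equivariant, so it suffices to verify the computation is compatible with the Levi action of $\fg([v]) = \fs\fl(E')\ltimes(\cdots)$ on each $T_{[v]}X$-summand; equivariance then rigidifies the answer and shows the coordinate choice is harmless. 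Everything else is the routine cofactor expansion and geometric series above, which I would not write out in full.
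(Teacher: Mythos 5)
Your computation is correct and is precisely the ``straightforward'' verification that the paper omits: the author only records the $\fg([v])$-module structure and the block form $\begin{pmatrix} A & y\\ x & 0\end{pmatrix}$ of the tangent space and then asserts the result, so your Schur-complement identity $\tdet M = \tdet(B)\bigl(t - x B^{-1} y\bigr)$ followed by the Neumann series for $(I_{n-1}+A)^{-1}$ is exactly the intended argument, and the alternating signs disappear under the admissible change of basis $A\mapsto -A$. Your worry about whether the Taylor coefficients in this chart are the $F_{k,[v]}$ is already settled by the paper's own definition of the differential invariants (Taylor coefficients in any adapted coordinates, well defined only as a system), so the closing equivariance discussion is not needed. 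The one bookkeeping point worth a sentence is that your chart has $n^{2}$ parameters for $\pp{n^2-1}$: the direction $A\propto I_{n-1}$ is the radial direction along $v$, so one should restrict to a transversal slice such as $\{\trace A=0\}$; since $\tzeros(\tdet_n)$ is a cone this does not change the form $xA^{k-2}y$ of the invariants.
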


Since the permanent hypersurface is not quasi-homogeneous, its differential invariants are more
difficult to calculate. It is even difficult to write down a general point in a nice way
(that depends on $m$, keeping in mind that we are not concerned with individual hypersurfaces,
but sequences of hypersurfaces). For example,  the point on the permanent
hypersurface chosen in  \cite{MR2126826} is not general as  there is a finite group that preserves
it. To get lower bounds it is sufficient to work with any point of
the permanent hypersurface, but one will not know if the obtained bounds are sharp.
To arrive at $dc(\tperm_m)$ being an exponential function of $m$, one might expect to improve
the exponent by one at each order of differentiation. The following theorem shows that
this does not happen at order three.

The Mignon-Ressayre result implies that
any hypersurface in $2n-2$ variables defined by a homogeneous polynomial  can be approximated to order two at any point by
an affine linear projection of $\{\tdet_n=0\}\subset \BC^{n^2}$.

\begin{theorem}\label{detprojs} 
Any hypersurface in $n-1$ variables can be approximated to order three at any point by
an affine linear projection of $\{\tdet_n=0\}\subset \BC^{n^2}$.

In particular, $\{ \tperm_m=0\}\subset \BC^{m^2}$ can be approximated to order  
  three  at a general point by an affine linear projection of 
  $\{\tdet_{m^2+1}=0\}\subset \BC^{(m^2+1)^2}$.
\end{theorem}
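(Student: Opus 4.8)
The plan is to show that the differential invariants of $X=\{\tdet_n=0\}$ computed in Theorem \ref{detdiffinvars}, namely $F_{2,[v]}(X)=xy$ and $F_{3,[v]}(X)=xAy$ where $(x,A,y)$ parametrizes $T_{[v]}X$ with $x$ a row $n$-vector, $y$ a column $n$-vector, and $A$ an $(n-1)\times(n-1)$ matrix, are \emph{generic} among hypersurface germs once we only ask to match up to order three. Concretely, I would fix a point on an arbitrary hypersurface $Y\subset \BP^{n-1}$, choose affine coordinates $(z^1,\ldots,z^{n-1},w)$ so that $T_0Y=\{w=0\}$, and write the Taylor expansion $w=Q_2(z)+Q_3(z)+\cdots$ with $Q_2$ a quadratic form in $n-1$ variables and $Q_3$ a cubic form in $n-1$ variables. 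The task is to produce an affine-linear map $\BC^{n-1}\to \BC^{n^2}$ landing near $[v]$ whose image, intersected appropriately, has $F_{2}=Q_2$ and $F_3=Q_3$ after the induced linear substitution on the tangent space.

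First I would set up the linear section/projection dictionary exactly as in \S\ref{valsect}: a hypersurface $Y$ of degree-three approximation data at a point is realized by an affine projection of $\{\tdet_n=0\}$ iff we can choose an $(n-1)$-dimensional affine subspace through $[v]$ (or a point near $[v]$) transverse to $X$ in the right way, such that the restriction of the second and third fundamental forms of $X$ pull back to $Q_2$ and $Q_3$. So I would parametrize the tangent directions to the affine subspace as linear functions $x=x(z)$, $y=y(z)$, $A=A(z)$ of the ambient parameter $z\in\BC^{n-1}$, and then $F_{2}$ pulls back to the quadratic form $z\mapsto x(z)\,y(z)$ and $F_3$ to the cubic form $z\mapsto x(z)\,A(z)\,y(z)$.

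The key step is a counting/surjectivity argument. Write $x(z)=xz$ meaning $x$ is now an $n\times(n-1)$ matrix acting on $z$, similarly $y(z)=yz$ with $y$ an $n\times(n-1)$ matrix, and $A(z)=\sum_a A_a z^a$ with each $A_a$ an $(n-1)\times(n-1)$ matrix. Then the quadratic form is $z^T({}^txy)z$ symmetrized, and I must hit an arbitrary quadratic form $Q_2$ in $n-1$ variables — the symmetric product ${}^tx y + {}^ty x$ ranges over all symmetric $(n-1)\times(n-1)$ matrices as $x,y$ range over $n\times(n-1)$ matrices (indeed already taking $x$ with a single nonzero row suffices to get arbitrary rank-one forms, and sums give everything, so $Q_2$ is free). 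For the cubic, with $x,y$ now essentially pinned down (or chosen generically subject to realizing $Q_2$), the map $(A_1,\ldots,A_{n-1})\mapsto \big(z\mapsto (xz)^T A(z) (yz)\big)$ is linear in the $A_a$; I need its image to be all cubic forms in $n-1$ variables. The dimension of the source is $(n-1)\cdot(n-1)^2=(n-1)^3$, while the space of cubic forms in $n-1$ variables has dimension $\binom{n+1}{3}\sim (n-1)^3/6$, so the count is comfortably favorable, and one checks directly — e.g. by choosing $x,y$ so that $xz$ and $yz$ each have generic entries linear in $z$ and letting $A_a$ be elementary matrices — that the span of the resulting cubics $\{(xz)^T E_{pq}(yz) z^a\}$ is everything. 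This linear-algebra surjectivity is the main obstacle: one must verify there is no hidden obstruction forcing the realizable cubics into a proper subspace (for instance a symmetry inherited from $v$), and it must be done uniformly in $n$, not case by case.

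Finally, having realized arbitrary $(Q_2,Q_3)$ in $n-1$ variables, I would apply this with $Y=\{\tperm_m=0\}\subset\BC^{m^2}$ at a general point, taking $n-1 = m^2$, i.e. $n = m^2+1$, to conclude that $\{\tperm_m=0\}$ is approximable to order three by an affine-linear projection of $\{\tdet_{m^2+1}=0\}\subset\BC^{(m^2+1)^2}$. I would close with the remark that this is why one cannot expect the Mignon--Ressayre exponent to improve by passing from order two to order three: the determinant is already "order-three generic" at the modest size $n\approx m^2$, so genuinely new input (higher orders, or non-infinitesimal/global geometry of the permanent hypersurface, as discussed in \S\ref{beyondgctsect}) is needed to push the lower bound further.
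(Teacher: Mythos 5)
Your setup coincides with the paper's: both arguments start from Theorem \ref{detdiffinvars}, substitute linear functions $x(z)$, $y(z)$, $A(z)$ of a parameter $z\in\BC^{n-1}$ into $F_2=xy$ and $F_3=xAy$, and then must show that the resulting pairs (quadric, cubic) sweep out all of $S^2\BC^{n-1}\times S^3\BC^{n-1}$. Where you diverge is precisely at the step you flag as ``the main obstacle'': you argue surjectivity of $(A_1,\ldots,A_{n-1})\mapsto x(z)A(z)y(z)$ by a dimension count plus an unverified genericity assertion, whereas the paper closes this step with an explicit normal form. After making $A$ linear in $(x,y)$ (forced, since otherwise the $(n-1)^2$ entries of $A$ sit in the kernel of $F_2$) and observing that the pair $(xy,xAy)$ is still non-generic because it vanishes on the two $(n-1)$-planes $\{x=0\}$ and $\{y=0\}$, the paper sets $y=L(x)$ with $L$ the identity. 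Then $F_2=\sum_i x_i^2$ and $F_3=\sum_{i,j}x_iA_{ij}(x)x_j$ with the $A_{ij}(x)$ arbitrary linear forms, and the latter expression visibly realizes every cubic in $x_1,\ldots,x_{n-1}$ with no genericity check at all.

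The one point where your version is genuinely weaker is the assertion that $Q_2$ is ``free'' and the cubics still span afterwards. Writing $x_i(z)=X_{ia}z^a$, $y_i(z)=Y_{ib}z^b$, the constraint $\sum_i x_i(z)y_i(z)=Q_2(z)$ says that the symmetrization of ${}^tXY$ is the matrix of $Q_2$; if $Q_2$ is degenerate this can force $X$ or $Y$ to be singular (e.g.\ $Q_2=0$ with $n-1$ odd forces ${}^tXY$ to be skew, hence singular). In that case the linear forms $x_i(z)$ or $y_j(z)$ span only a proper subspace $W$, and every attainable cubic lies in $W\cdot S^2\BC^{n-1}\subsetneq S^3\BC^{n-1}$, so your surjectivity fails at such points. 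It does hold whenever the prescribed second fundamental form is nondegenerate, which covers a general point of $\{\tperm_m=0\}$ and hence the ``in particular'' clause; note that the paper's construction likewise only exhibits the nondegenerate quadric $\sum x_i^2$ as $F_2$. The final reduction $n-1=m^2$, i.e.\ $n=m^2+1$, is identical in both arguments.
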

\begin{proof} 
The rank of $F_2$ for the determinant is $2(n-1)$, whereas the rank of $F_2$ for
the permanent, and   of a general  hypersurface  in $q$ variables at a general point,  is $q-2$.
so one would need to project
to eliminate $(n-1)^2$  variables to agree to order two.

Thus it is first necessary to  perform   a projection so that the matrix $A$, which has independent
variables as entries becomes  linear in the entries of $x,y$, write $A=A(x,y)$.
The projected pair $F_2,F_3$ is still not generic because it has two linear
spaces of dimension $n-1$ in its zero set. This can be fixed by setting $y=L(x)$ for
$L:\BC^{n-1}\ra \BC^{n-1}$ a linear isomorphism. 
At this point one has $F_2=L(x)x$, $F_3=L(x)A(x,L(x))x$. Take $L$ to be the identity map, so the
cubic is of the form 
$\sum_{i,j}x_iA_{ij}(x)x_j$ where the $A_{ij}(x)$ are arbitrary. This is an arbitrary cubic.
\end{proof}

\section{Geometric Complexity Theory approach to $\ol{\vp_{ws}}$ v.  $\vnp$}\label{MSsect}

In a series of   papers \cite{MS1,MS2,MS3,MS4,MS5,MS6,MS7,MS8},
 K. Mulmuley and M. Sohoni  outline an approach to prove $\ol{\vp_{ws}}\neq \vnp$.  
\medskip
Let  $\ell$ be a linear coordinate
on $\BC$, and   take  any linear inclusion $\BC\op \BC^{m^2}\subset \BC^{n^2}$ to have
$\ell^{n-m}\tperm_m$ be
a homogeneous degree $n$ polynomial  on  $\BC^{n^2}$. Mulmuley and Sohoni
observe that $\ol{\vp_{ws}}\neq \vnp$ is equivalent to the following assertion:
Let $\ol{dc}(\tperm_m)$ denote 
that the smallest value of $n$  such that 
$[\ell^{{  n}-m}\tperm_m]  \in \ol{GL_{{  n}^2} \cdot   [\tdet_{  n}]}$.
Then $\ol{\vp_{ws}}\neq \vnp$ is equivalent to the statement
$\ol{dc}(\tperm_m)$
grows faster than any polynomial:
\begin{conjecture}\cite{MS1}\label{msmainconj} $\ol{dc}(\tperm_m)$ grows faster than any  polynomial in $m$.
\end{conjecture}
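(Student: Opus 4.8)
This is the central open conjecture of the Geometric Complexity Theory program, so what follows is a research plan rather than a proof. The strategy is to replace the non-existence of an efficient approximate projection by a statement about representations of $GL_{n^2}$ that can, in principle, be certified combinatorially. Set $Z_n:=\ol{GL_{n^2}\cdot[\tdet_n]}\subset \BP(S^n\BC^{n^2})$ and, for the padded permanent, $W_{m,n}:=\ol{GL_{n^2}\cdot[\ell^{n-m}\tperm_m]}\subset \BP(S^n\BC^{n^2})$; both are $GL_{n^2}$-stable projective varieties, and $\ol{dc}(\tperm_m)$ is the least $n$ with $W_{m,n}\subset Z_n$. The plan is: (a) decompose the homogeneous coordinate rings $\BC[Z_n]$ and $\BC[W_{m,n}]$ as graded $GL_{n^2}$-modules into irreducibles $S_\lambda\BC^{n^2}$ with multiplicities; (b) exhibit, for every polynomially bounded $n=n(m)$ and all large $m$, an irreducible $S_\lambda\BC^{n^2}$ whose multiplicity in $\BC[W_{m,n}]$ exceeds its multiplicity in $\BC[Z_n]$ (in the most optimistic scenario, an \emph{occurrence obstruction}: multiplicity zero in $\BC[Z_n]$ but positive in $\BC[W_{m,n}]$). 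Since a surjection $\BC[Z_n]\twoheadrightarrow\BC[W_{m,n}]$ of $GL_{n^2}$-modules would follow from $W_{m,n}\subset Z_n$, such a $\lambda$ certifies $W_{m,n}\not\subset Z_n$, hence $\ol{dc}(\tperm_m)>n$; establishing (b) for all polynomial $n(m)$ yields the conjecture.

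First I would make step (a) precise. Because $\tzeros(\tdet_n)$ is quasi-homogeneous with reductive stabilizer — by the classical theorem of Frobenius the stabilizer of $[\tdet_n]$ in $GL_{n^2}$ is, up to a finite extension, $SL(E)\times SL(F)$ acting by left and right multiplication together with the transpose — the coordinate ring of the \emph{orbit} is $\BC[GL_{n^2}/\mathrm{Stab}]$ and decomposes by Frobenius reciprocity as $\bigoplus_\lambda S_\lambda\BC^{n^2}\otimes \big((S_\lambda\BC^{n^2})^*\big)^{\mathrm{Stab}}$; the same applies on the permanent side, where the stabilizer of $\tperm_m$ is essentially the normalizer of the torus in $SL(E)\times SL(F)$ and one must also track the effect of the $\ell^{n-m}$ padding. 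The coordinate ring of the \emph{closure} is a submodule of this, and the genuinely hard bookkeeping is to compute the multiplicities that survive in the closure: on the determinant side these are governed by plethysm coefficients $\mathrm{mult}\big(S_\lambda\BC^{n^2},\, S^d(S^n\BC^{n^2})\big)$ together with their $SL(E)\times SL(F)$-invariant refinement, and on the permanent side by Kronecker-type coefficients. I would first establish this module-theoretic dictionary rigorously (this part is essentially formal), and then use it to recover, inside the GCT framework, the known lower bounds — the $\sqrt 2\, m$ bound of Cai and the quadratic bound of Mignon--Ressayre — as a consistency check and a source of candidate partitions $\lambda$.

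The main obstacle is step (b) together with the multiplicity computations it rests on. Neither the relevant plethysm coefficients nor the positivity of the relevant Kronecker coefficients are known in closed form; deciding positivity of a Kronecker coefficient is itself a subtle problem, and it is not clear a priori that occurrence obstructions of the required kind exist in the needed range — one may be forced into the finer regime of comparing actual multiplicities, or of combining the representation-theoretic data with the local differential-geometric invariants of $\tzeros(\tdet_n)$ computed in Theorem~\ref{detdiffinvars} (which already show, via Theorem~\ref{detprojs}, that such obstructions cannot be visible at low order of jets). A plausible route is to isolate a combinatorially tractable family of $\lambda$ — for instance rectangular or near-rectangular shapes — prove a vanishing theorem for the determinant plethysm on that family, and separately prove non-vanishing for the padded permanent; I expect essentially all of the difficulty to lie in these two coefficient estimates.
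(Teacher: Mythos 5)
This statement is a conjecture, not a theorem: the paper offers no proof of it, only a summary of the Mulmuley--Sohoni program in \S\ref{MSsect}, so there is nothing to compare your argument against line by line. What you have written is, as you say yourself, a research plan, and it coincides almost exactly with the program the paper describes: the reduction of $\ol{dc}(\tperm_m)>n$ to the non-existence of a $GL_{n^2}$-equivariant surjection $\BC[\ol{GL_{n^2}\cdot[\tdet_n]}]\twoheadrightarrow \BC[\ol{GL_{n^2}\cdot[\ell^{n-m}\tperm_m]}]$, the search for occurrence (or multiplicity) obstructions $S_\lambda\BC^{n^2}$, the use of the reductive stabilizers via algebraic Peter--Weyl on the orbits, and the passage from orbit to orbit closure as the genuinely hard extension problem. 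This is precisely Conjecture \ref{msmainconjb} and the surrounding discussion, including the role of Kronecker coefficients isolated in \cite{BLMW}. So your proposal is a faithful account of the intended approach, but it is not a proof, and no step of it is carried out.

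Two substantive cautions. First, the paper itself records (in the remark citing \cite{PBkron}) that the vanishing of Kronecker coefficients needed for the occurrence-obstruction version of Conjecture \ref{msmainconjb} is now believed unlikely to occur; you hedge in the right direction by allowing for multiplicity comparisons, but you should treat the "most optimistic scenario" as probably closed off rather than merely uncertain. Second, be careful with the claim that recovering the Mignon--Ressayre and Cai bounds inside the representation-theoretic framework is a routine consistency check: those bounds are obtained by local differential geometry and by rank arguments respectively, and the paper gives no indication that they have representation-theoretic proofs; Theorems \ref{detdiffinvars} and \ref{detprojs} in fact show that low-order jet data cannot separate the two hypersurfaces beyond order two, which limits what any "low-degree" obstruction can see. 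Neither point invalidates your plan, but both mean that essentially all of the difficulty remains concentrated in step (b), exactly where you locate it, and the conjecture remains open.
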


\begin{remark}
Recently in \cite{LMRdet} it was shown that $\ol{dc}(\tperm_m)\geq\frac{m^2}2$ and that there
exist sequences $(p_m)$ with $\ol{dc}(p_m)<dc(p_m)$.
\end{remark}

\subsection{Description of the program to prove Conjecture \ref{msmainconj} outlined in \cite{MS2}}\label{gctdessect}

For a complex projective variety $X\subset \BP V$, let $I(X)\subset Sym(V^*)$ be the
ideal of polynomials vanishing on $X$.   Let $\BC[X] =Sym(V^*)/I(X)$ denote the homogeneous coordinate ring.
For complex projective
varieties $X,Y\subset \pp N=\BP V$, one has $X\subset Y$ iff $\BC[Y]$ surjects onto $\BC[X]$ (by restriction of
functions).    Mulmuley and Sohoni set out to prove:

\begin{conjecture}\cite{MS1}\label{msmainconjb} Let $u(m)$ be a polynomial. 
There is a sequence of irreducible modules $M_m$ for  $GL_{u(m)^2}$  such  that $M_m$ appears in
$\BC[\ol{GL_{u(m)^2} \cdot   [\ell^{u(m)-m}\tperm_m]}]$
but not in $\BC[\ol{GL_{u(m)^2} \cdot   [\tdet_{u(m)}]}]$.
\end{conjecture}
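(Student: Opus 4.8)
The plan is to produce a \emph{representation-theoretic obstruction} in the sense of \S\ref{gctdessect}. Fix a polynomial $u=u(m)$, write $n=u(m)$, $N=n^2$, $V=S^n\BC^N$, and let $\ol{\cO_{\det}}=\ol{GL_N\cdot[\tdet_n]}\subset\BP V$ and $\ol{\cO_{\perm}}=\ol{GL_N\cdot[\ell^{n-m}\tperm_m]}\subset\BP V$ be the two orbit closures. By the discussion in \S\ref{gctdessect}, if $[\ell^{n-m}\tperm_m]$ lay in $\ol{\cO_{\det}}$ then $\ol{\cO_{\perm}}\subseteq\ol{\cO_{\det}}$, so restriction of functions would give a surjection of graded $GL_N$-modules $\BC[\ol{\cO_{\det}}]\twoheadrightarrow\BC[\ol{\cO_{\perm}}]$, and then (by complete reducibility in each degree) every irreducible $GL_N$-module occurring in $\BC[\ol{\cO_{\perm}}]$ would occur in $\BC[\ol{\cO_{\det}}]$. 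Hence it suffices, for every $u$, to exhibit one partition $\la=\la(m)$ with $|\la|$ bounded polynomially in $m$ such that the Schur module $S_\la\BC^N$ occurs in $\BC[\ol{\cO_{\perm}}]$ but not in $\BC[\ol{\cO_{\det}}]$.

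First I would pin down an upper bound on the multiplicities on the determinant side using the large symmetry group of $\tdet_n$. The stabilizer of $[\tdet_n]$ in $GL_N$ is $H_{\det}=\bigl((GL_n\times GL_n)/\BC^*\bigr)\rtimes\BZ_2$, acting by $X\mapsto AX\,{}^{\mathsf T}B$ and $X\mapsto{}^{\mathsf T}X$, so $\cO_{\det}\cong GL_N/H_{\det}$, and by the algebraic Peter--Weyl theorem the multiplicity of $S_\la\BC^N$ in $\BC[\cO_{\det}]$ equals $\dim\bigl(S_\la(\BC^N)^*\bigr)^{H_{\det}}$, a quantity governed by branching $GL_{n^2}\downarrow GL_n\times GL_n$ together with the $\BZ_2$- and scaling-constraints, i.e.\ by rectangular-Kronecker-type coefficients. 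Since $\cO_{\det}$ is dense in $\ol{\cO_{\det}}$, restriction gives an injection $\BC[\ol{\cO_{\det}}]\hookrightarrow\BC[\cO_{\det}]$, so the Peter--Weyl description is at least an upper bound on which $S_\la$ can occur in $\BC[\ol{\cO_{\det}}]$ --- which is exactly the side we need for a \emph{non}occurrence conclusion. (If one additionally proves normality of $\ol{\cO_{\det}}$, conjectured by Mulmuley and Sohoni, the inclusion becomes an equality and the bound becomes exact.)

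Next I would produce a \emph{lower} bound on the multiplicities on the permanent side. The stabilizer $H_{\perm}$ of $[\ell^{n-m}\tperm_m]$ in $GL_N$ contains the normalizers of the tori acting on the embedded $\BC^{m^2}$ block (as in the definition of $\tperm_m$), the $GL_1$ rescaling the $\ell$-coordinate, and the block-unipotent part, and is thus an explicit, comparatively small group. Because here we control only the orbit and not a priori its closure, the passage goes the other way: using a flat family degenerating a generic point of $\ol{\cO_{\perm}}$ to $[\ell^{n-m}\tperm_m]$ and upper-semicontinuity of $GL_N$-multiplicities in the coordinate rings of the fibres, any $S_\la$ occurring generically on $\ol{\cO_{\perm}}$ also occurs in $\BC[\ol{\cO_{\perm}}]$; this reduces the lower bound to computing $\dim\bigl(S_\la(\BC^N)^*\bigr)^{H_{\perm}}$, again a branching/plethysm quantity, now essentially from $GL_{n^2}$ to $GL_{m^2}\times GL_1$ together with the symmetric-group symmetries of the permanent.

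With both sides reduced to explicit invariant-dimensions, the remaining step is purely combinatorial: for each $u$, exhibit $\la(m)$ of polynomial size with $\dim\bigl(S_\la(\BC^N)^*\bigr)^{H_{\perm}}>0$ while $\dim\bigl(S_\la(\BC^N)^*\bigr)^{H_{\det}}=0$. This last step is the main obstacle. The relevant invariant-dimensions are controlled by plethysm and Kronecker coefficients (and their rectangular and ``padded'' variants), for which no usable positive combinatorial formula is available; moreover it is quite possible that pure \emph{occurrence} obstructions (multiplicity zero versus positive) do not exist in the required range, in which case one is forced into the weaker multiplicity-obstruction version (compare Conjecture \ref{msmainconjb}). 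Supplying the normality input (or a sufficient substitute) for $\ol{\cO_{\det}}$, and --- far harder --- producing an infinite family of genuine obstructions together with a proof that they cannot also occur on the determinant side, are the central difficulties; at present only partial evidence, such as the bound $\ol{dc}(\tperm_m)\ge\tfrac{m^2}{2}$ noted after Conjecture \ref{msmainconj}, is known.
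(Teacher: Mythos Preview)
The statement you are attempting to prove is a \emph{conjecture} in the paper, not a theorem; the paper gives no proof and does not claim one. What you have written is not a proof but a summary of the GCT strategy itself, essentially matching the paper's own description in \S\ref{gctdessect}: pass to orbit closures, use Peter--Weyl on the open orbits to control which $S_\la$ can appear, and then search for a module occurring on the permanent side but not on the determinant side. You explicitly acknowledge that the decisive step --- exhibiting such $\la(m)$ --- is ``the main obstacle'' and that only partial evidence is known. So there is no genuine proof here, and you should not present this as one.

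Two further points. First, the paper records (in the Remark immediately following Conjecture~\ref{msmainconjb}) that \cite{PBkron} gives evidence that the vanishing of Kronecker coefficients needed for this approach is \emph{unlikely to occur}; you allude to this possibility, but it deserves emphasis: the occurrence-obstruction version of the conjecture may simply be false, in which case no argument along these lines can succeed, and one must pass to the harder multiplicity-obstruction formulation (which is \emph{not} what Conjecture~\ref{msmainconjb} states --- your parenthetical ``compare Conjecture~\ref{msmainconjb}'' is circular here). Second, your semicontinuity argument for the permanent-side lower bound is too vague to be usable: you degenerate a generic point of $\ol{\cO_{\perm}}$ to $[\ell^{n-m}\tperm_m]$ and invoke ``upper-semicontinuity of $GL_N$-multiplicities,'' but what you actually need is that modules in $\BC[\cO_{\perm}]$ persist into $\BC[\ol{\cO_{\perm}}]$, which is the opposite direction and is exactly the extension problem the paper flags at the end of \S\ref{gctdessect}.
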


In an attempt to find such a sequence of modules,  Mulmuley and Sohoni  
 consider $SL_{n^2}\cdot \tdet_n$ and $SL_{m^2}\cdot \tperm_m$ because on the one hand their
coordinate rings can be determined in principle using representation theory, and on the other hand they are
closed affine varieties. They observe that any $SL_{n^2}$-module appearing in $\BC[SL_{n^2}\cdot \tdet_n]$ must
also appear in    $\BC[\ol{GL_{n^2}\cdot \tdet_n}]_k$ for some $k$.
Regarding the permanent, for $n>m$,  $SL_{n^2}\cdot\ell^{n-m}\tperm_m$ is not closed, so they develop
machinery to transport information about $\BC[SL_{m^2}\cdot \tperm_m]$ to $\BC[\ol{GL_{n^2}\cdot\ell^{n-m}\tperm_m}]$,
including  a notion of {\it partial stability}.

Mathematical aspects of this program are discussed in \cite{BLMW}.  
The representation-theoretic information Mumuley and Sohoni propose to exploit is
studied in detail. In particular \cite[Thm 5.7.1]{BLMW} is a precise description of conditions on {\it Kronecker
coefficients} that are equivalent to  Conjecture \ref{msmainconjb}. In addition, suggestions
are made for further geometric information that one could take into account that
might imply a more tractable problem in representation theory. 

The price of using $SL_{n^2}$ instead of $GL_{n^2}$ is that one loses the grading of
the coordinate rings. On the other hand, in order to use $GL_{n^2}$, one must
solve, or at least partially solve,  an {\it extension problem}, which to even
begin work on, means that one must determine the codimension one
components of the boundaries in the orbit closures.

\begin{remark} Recently in \cite{PBkron} evidence was given that the vanishing of Kronecker coefficients that would
be necessary for Conjecture \ref{msmainconjb}  
  is unlikely to occur.
\end{remark}

\subsection{Beyond determinant and permanent}\label{beyondgctsect}

Instead of considering $\tdet_n$, one could take a sufficiently generic $g_n\in GL_{n^2}$ and
consider $p_n:= \tdet_n+ g_n\cdot \tdet_n$. Then the subgroup $G(p_n)$ of $GL_{n^2}$ preserving $p_n$  will be the same as that for
a generic polynomial, although the sequence $(p_n)$ is still $\vp_{ws}$-complete. Thus just looking at
the orbit, there would   be {\it fewer} modules  appearing in $\BC[SL_{n^2}\cdot [p_n]]$ than  
in  $\BC[SL_{n^2}\cdot [\tperm_n]]$. In particular the orbit closure is larger than
that of the permanent. More generally, let $r(n)$ be a polynomial and
take a sequence of points in $p_n\in \s_{r(n)}(\ol{GL_{n^2}\cdot [\tdet_n]})$, the $r$-th
secant variety of $\ol{GL_{n^2}\cdot [\tdet_n]}$.
One could study the differential invariants of these varieties to see how they project
to the permanent as in \S\ref{valsect} and consider GCT program using
the varieties $\ol{GL_{n^2}\cdot p_n}$.

\medskip

More examples of sequences of polynomials are given by the {\it immanants} 
defined by Littlewood in \cite{MR2213154}.
Immanants generalize the determinant and permanent.  
 Given a partition $\pi=(p_1\hd p_r)$ of $n$,
and a vector space $V$ of dimension at least $r$, 
let $S_{\pi}V$ denote the corresponding irreducible $GL(V)$-module.
$IM_{\pi}\in S^n\BC^{n^2}$ may be defined as follows: consider
$\BC^{n^2}=E\ot F$, where $E,F=\BC^n$. Then
$S^n(E\ot F)=\op_{\pi}S_{\pi}E\ot S_{\pi}F$ as a $GL(E)\times GL(F)$ module.
Let $D^E\subset SL(E)$, $D^F\subset SL(F)$ denote  
the tori, i.e.,   the groups of diagonal matrices with determinant one.  Let $\FS_n^E,\FS_n^F$ denote
the groups of permutation matrices acting on the left and right, and let
$\Delta(\FS_n)\subset \FS_n^E\times \FS_n^F$ denote the diagonal embedding.
Then $IM_{\pi}\in S_{\pi}E\ot S_{\pi}F$ is the unique (up to scale) element
acted on trivially by $(D^E\times D^F)\ltimes \Delta(\FS_n)$.

In \cite{keye}, building on work in \cite{MR1275631,MR1412753}, it is shown that
for all non-self dual $\pi\neq (1^n),(n)$, that 
$G(IM_{\pi})= ((D^E\times D^F)\ltimes \Delta(\FS_n) )\ltimes \BZ_2$, where $\BZ_2$ acts
by sending a matrix to its transpose.

Consider
$IM_{(n-1,1)}$ and $IM_{(2,1^{n-1})}$. The first is $\vnp$-complete and the second is in $\vp$,
see \cite{buer:00-3}, so one  could attempt to apply the GCT program to them. By \cite{keye}
$G(IM_{(n-1,1)})=G(IM_{(2,1^{n-1})})$ so 
$\BC[SL_{n^2}\cdot [IM_{(n-1,1)}]]=\BC[SL_{n^2}\cdot [IM_{(2,1^{n-1})}]]$. 
Without examining the boundaries of 
$\ol{GL_{n^2}\cdot [IM_{(n-1,1)}]}$ and $\ol{GL_{n^2}\cdot [IM_{(2,1^{n-1})}]}$ there is
no way to distinguish them.

   Such investigations will be the subject of future work.

\section{Towards   geometric definitions of complexity classes}\label{geomccdefsect}

As mentioned several times, symmetry, sometimes in hidden form, appears to play a central role
in characterizing sequences in   $\vp$ that are apparently not in $\vp_e$. 
To 
  make a   geometric study of complexity, it would be desirable to  have coordinate free
definitions. In this section I give a coordinate free and geometric definition of the
class $\ol{\vp}_e$. I then give a coordinate free and geometric definition of
a class $\vp_{hs}$ which is intended as a first attempt    to geometrize the class $\vp$.
Unfortunately
at this writing I have no idea for a proposed purely geometric definition
of $\vnp$. (S. Basu and M. Shub, in separate personal communications,
have proposed that $\vnp$ should somehow be viewed as a bundle
over $\vp$, but I have been unable to make this precise.)

\subsection{Joins and multiplicative joins}

The {\it join} of projective varieties  $X_1\hd X_r\subset \BP V$, $J(X_1\hd X_r)\subset \BP V$,   is the Zariski closure of the points
of the form $[p_1+\cdots + p_r]$ with $[p_j]\in X_j$.
The expected dimension of $ J(X_1\hd X_r)$ is $\tmin(\sum\tdim X_j + r-1, \tdim \BP V)$.
Let $\hat T_{[p]}X\subset V$ denote the affine tangent space of $X$ at $[p]\in X$.  Terracini's lemma says that
if $([p_1]\hd[p_r])\in X_1\ctimes X_r$ is a general point, then
$$
\hat T_{[p_1+\cdots + p_r]}J(X_1\hd X_r)=\hat T_{[p_1]}X_1+\cdots + \hat T_{[p_r]}X_r.
$$

One can similarly define joins in affine space. The expressions are the same without the brackets.

\begin{definition}\label{mjdef}
Let $X\subset \BP S^aV$, $Y\subset \BP S^bV$ be varieties.
  Define the {\it multiplicative join} of $X$ and $Y$,   $MJ(X,Y)$,
by
$$
MJ(X,Y):=\{ [p  q] \mid [p]\in X,\ [q]\in Y\}\subset \BP S^{a+b}V.
$$
For varieties $X_j\subset \BP S^{d_j}V$,  define $MJ(X_1\hd X_r)\subset S^{d}V$
similarly (or inductively as $MJ(X,Y,Z)=MJ(X,MJ(Y,Z))$).
In the special case $X_j=\BP V\subset \BP S^1V$, $MJ(\BP V\hd \BP V)$ is the Chow variety 
of polynomials that decompose into a product of linear factors.
\end{definition}

Similarly, let $A_{d,v}$ denote the space of all polynomials of degree at most $d$ in $v$ variables.
For affine varieties $X\subset A_{d_1,v}$, $Y\subset A_{d_2,v}$, $MJ(X,Y)\subset A_{d_1+d_2,v}$ 
is defined in the same way without brackets.

\begin{proposition}Let $X_j\subset \BP S^{d_j}V$ be varieties and let $([p_1]\hd [p_r])\in X_1\ctimes X_r$ be a general point. Then
$$\hat T_{[p_1\ccdots p_r]}MJ(X_1\hd X_r)=
\hat T_{[p_1]}X_1\circ p_2\circ\cdots\circ p_r+
\cdots + p_1\circ\cdots\circ p_{r-1}\circ \hat T_{[p_r]}X_r
$$ 
In particular, the expected dimension of  $MJ(X_1\hd X_r)$ is $\tmin(\tdim X_1+\cdots + \tdim X_r, \tdim \BP S^dV)$.
\end{proposition}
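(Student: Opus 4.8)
The plan is to argue exactly as in the proof of Terracini's lemma, by parametrizing $MJ(X_1\hd X_r)$ with a multiplication map and differentiating it. First I would pass to affine cones: let $\hat\mu\colon \hat X_1\times\cdots\times \hat X_r\to S^dV$ be the ordinary polynomial multiplication map $(p_1\hd p_r)\mapsto p_1\circ\cdots\circ p_r$, so that the affine cone $\widehat{MJ(X_1\hd X_r)}$ is the Zariski closure of the image of $\hat\mu$. The general principle I would invoke is the characteristic-zero fact that, for a rational map of varieties that is dominant onto the closure $Z$ of its image, the differential at a general point of the source is surjective onto the affine tangent space of $Z$ at the image point; this is generic smoothness, together with the remark that a general point of the source maps to a general, hence smooth, point of $Z$.

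Next I would compute $d\hat\mu$ at a general point $(p_1\hd p_r)$ via the Leibniz rule: for a tangent vector $(w_1\hd w_r)\in \hat T_{p_1}\hat X_1\times\cdots\times \hat T_{p_r}\hat X_r$,
\[
d\hat\mu_{(p_1\hd p_r)}(w_1\hd w_r)=\sum_{j=1}^r p_1\circ\cdots\circ p_{j-1}\circ w_j\circ p_{j+1}\circ\cdots\circ p_r.
\]
Using the standard identification of the tangent space to a cone, $\hat T_{p_j}\hat X_j=\hat T_{[p_j]}X_j\subset S^{d_j}V$ (legitimate because $[p_j]$ is a smooth point of $X_j$, which holds since $([p_1]\hd[p_r])$ is general), the image of $d\hat\mu_{(p_1\hd p_r)}$ is exactly
\[
\hat T_{[p_1]}X_1\circ p_2\circ\cdots\circ p_r+\cdots+p_1\circ\cdots\circ p_{r-1}\circ\hat T_{[p_r]}X_r .
\]
Since $([p_1]\hd[p_r])$ is general, $\hat\mu$ is smooth there and $[p_1\circ\cdots\circ p_r]$ is a general point of $MJ(X_1\hd X_r)$, so by the principle above this linear space equals $\hat T_{[p_1\circ\cdots\circ p_r]}MJ(X_1\hd X_r)$, which is the claimed formula.

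For the dimension statement, each of the $r$ summands is a linear subspace of $S^dV$ of dimension $\tdim X_j+1$, since multiplication by the fixed nonzero polynomial $\prod_{k\ne j}p_k$ is injective on $S^{d_j}V$ (there are no zero divisors), and all $r$ of them contain the common line $\BC\cdot(p_1\circ\cdots\circ p_r)$ because $p_j\in\hat T_{[p_j]}X_j$. Hence their sum has dimension at most $\sum_{j=1}^r(\tdim X_j+1)-(r-1)=\tdim X_1+\cdots+\tdim X_r+1$, and passing from the cone $\widehat{MJ(X_1\hd X_r)}$ to $MJ(X_1\hd X_r)$ gives $\tdim MJ(X_1\hd X_r)\le \tdim X_1+\cdots+\tdim X_r$; together with the trivial bound $\tdim MJ(X_1\hd X_r)\le\tdim\BP S^dV$ this shows the expected (a priori maximal) dimension is $\tmin(\tdim X_1+\cdots+\tdim X_r,\tdim\BP S^dV)$.

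The step I expect to require the most care is the passage from \lq\lq image of the differential of the parametrization\rq\rq\ to \lq\lq affine tangent space of $MJ(X_1\hd X_r)$\rq\rq : one must check that the genericity hypothesis on $([p_1]\hd[p_r])$ really does force $[p_1\circ\cdots\circ p_r]$ into the smooth locus of $MJ(X_1\hd X_r)$ and into the open set on which $\hat\mu$ is a submersion onto its image, i.e., that the genericity assumed in the statement is precisely what makes generic smoothness applicable. By contrast, the Leibniz computation of $d\hat\mu$ and the identification $\hat T_{p_j}\hat X_j=\hat T_{[p_j]}X_j$ are routine.
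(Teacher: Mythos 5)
Your proof is correct and follows essentially the same route as the paper, which simply differentiates $p_1(t)\circ\cdots\circ p_r(t)$ at $t=0$ via the Leibniz rule for curves $p_j(t)$ in the $X_j$. Your version merely makes explicit the generic-smoothness step and the dimension count that the paper leaves implicit.
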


\begin{proof} Let $p_j(t)$ be a curve in $X_j$ with $p_j(0)=p_j$.
Differentiate the expression $p_1(t)\circ\cdots\circ p_r(t)$ at $t=0$ to get the result.
\end{proof}

\begin{question} What are the degenerate multiplicative joins, i.e., those that fail to be of the expected dimension?
\end{question}

\subsection{A geometric  characterization of $\ol{\vp}_e$}

Recall that the expression size $E(p)$ of a polynomial $p\in A_{d,v}$ is given by
the number of internal nodes of the smallest tree circuit computing $p$.
Define $\ol{E}(p)$ to be the smallest integer such that
there is a curve $p_t$ with $\tlim_{t\ra 0}p_t=p_0$ and such
that $E(p_t)=\ol{E}(p)$ for $t\neq 0$.
By definition, a
  sequence  $(p_n)\in A_{d(n),v(n)}$ is in $\vp_e$ (resp. $\ol{\vp}_e$)  if there exists
a polynomial $r(n)$  such that $E(p_n)\leq r(n)$ (resp. $\ol{E}(p_n)\leq r(n)$.).

\bigskip

To  a tree circuit $\G$ associated to a polynomial $p$,    associate an algebraic variety
as follows: first form an new tree circuit $\G'$ by   collapsing
all pairs of input nodes that are joined by a $+$ to a single input node, and   repeat 
as many times as necessary until no pairs of input nodes are joined by a $+$.
  (I take this first step to eliminate
the choice of coordinates involved in making the circuit.)
Associate to each input node a copy of $\BP V$.

Thus on $\G'$, if any two input nodes are joined, they are joined by a $*$-node.
Now perform a step by step procedure to eliminate all $*$-nodes joining
pairs of input nodes.  
Take  a $*$-node joined to two input nodes, and form a subtree containing  all other $*$-nodes joined to it and
an input node. Say there
are $j_1-1$ such. Record the variety $MJ_{j_1}:=MJ(\BP V\hd \BP V)$ of $j_1$ copies of $\BP V$. Collapse the subtree to
a single input node and associate $MJ_{j_1}$ to this input node. Now start again, say we arrive
at $j_2-1$ nodes in the subtree and record the variety $MJ_{j_2}=MJ(\BP V\hd \BP V)$ of $j_2$ copies of $\BP V$.
Continue until we have recorded $p$ varieties of multiplicative joins of $V$ of various
sizes. 

We arrive at a new graph $\G''$ all of whose $p$ input nodes
have varieties $MJ_{j_i}$ associated to them and when input nodes    are paired together by an
internal node, the node is a   $+$-node. 
Now   perform a step by step procedure to eliminate all $+$'s joining
pairs of input nodes.  
Take the first $+$, say  that the variety $MJ_{j_{i_1}}$ 
is one of the input nodes and    form a subtree consisting of all other $+$'s joined to it. Say there
are $k-1$ such. Record the variety $J(MJ_{j_{i_1}}\hd MJ_{j_{i_{k}}})$. Collapse the subtree to
a single input node and associate $J(MJ_{j_{i_1}}\hd MJ_{j_{i_{k}}})$ to this input node.  
Continue until we have  varieties of   joins of multiplicative joins  of various
sizes as our new input nodes with all pairings of input nodes $*$-nodes. 

Now continue as we did with $\G'$, taking multiplicative joins (of the joins of multiplicative joins)
until the further collapsed graph has all pairings of input nodes $+$'s, then go back to taking
joins etc...

This process terminates after a number of steps fewer than the number of nodes of $\G$, and one
arrives at a variety $\Sigma_{\Gamma}$
of successive joins and multiplicative joins.
  By construction $p \in \Sigma_{\G}$. 

Note that for each such variety, there are many $\G$ that are associated to it, but each has, up to the 
initial $v$ times the number of initial input nodes, the same expression size.

Let $\Sigma_R^{d,v}$ denote the union of all the varieties obtainable from a graph of at most $R$ internal  nodes
computing an element of $A_{d,v}$.
There is a finite number of such, so $\Sigma_R^{d,v}$ is an algebraic variety.
The above discussion implies

\begin{theorem}\label{olvpechar}  Let $p_n\in A_{d(n),v(n)}$ be a sequence with $d,v$ polynomials. Then $(p_n)\in\ol{\vp}_e$ iff there exists
a polynomial $R(n)$ and $p_n\in \Sigma^{d(n),v(n)}_{R(n)}$. In other words the complexity class
$\ol{\vp}_e$ is characterized by a sequence of algebraic varieties.
\end{theorem}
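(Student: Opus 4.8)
The plan is to extract a clean two-way implication from the construction of the varieties $\Sigma_\Gamma$ that was described just above the statement, and then to package the finiteness bookkeeping so that $\Sigma_R^{d,v}$ is genuinely an algebraic variety for each $R$. First I would nail down the forward direction: suppose $(p_n)\in\ol{\vp}_e$, so by definition $\ol{E}(p_n)\le R(n)$ for some polynomial $R$, meaning there is a curve $p_{n,t}$ with $\lim_{t\to 0}p_{n,t}=p_n$ and $E(p_{n,t})=\ol{E}(p_n)\le R(n)$ for $t\neq 0$. For each fixed $t\neq 0$ choose a tree circuit $\Gamma_{n,t}$ of size at most $R(n)$ computing $p_{n,t}$, and run the collapsing procedure (collapse $+$-joined input nodes, then alternately record multiplicative joins and joins) to produce a variety $\Sigma_{\Gamma_{n,t}}$ with $p_{n,t}\in\Sigma_{\Gamma_{n,t}}$. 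Since the number of combinatorial types of tree circuit with at most $R(n)$ internal nodes (computing an element of $A_{d(n),v(n)}$) is finite, some single type occurs for infinitely many $t$ approaching $0$; along that sequence the $p_{n,t}$ all lie in the \emph{same} variety, which is a closed set, so the limit $p_n$ lies in it too. That variety is by definition one of the finitely many building the union $\Sigma^{d(n),v(n)}_{R(n)}$, so $p_n\in\Sigma^{d(n),v(n)}_{R(n)}$.

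For the reverse direction, suppose $p_n\in\Sigma^{d(n),v(n)}_{R(n)}$ for a polynomial $R$. By definition of $\Sigma_R^{d,v}$ as a finite union, $p_n$ lies in some $\Sigma_{\Gamma_n}$ coming from a tree circuit $\Gamma_n$ with at most $R(n)$ internal nodes. Here I would use the key structural fact, implicit in the construction, that a generic point of $\Sigma_{\Gamma_n}$ is computed by a tree circuit of essentially the same size as $\Gamma_n$ --- more precisely, up to the additive $v(n)$-per-input-node overhead noted in the paragraph before the theorem, every point of the open dense stratum of $\Sigma_{\Gamma_n}$ has expression size bounded by a polynomial in $R(n)$ and $v(n)$. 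Taking a curve $p_{n,t}$ inside $\Sigma_{\Gamma_n}$ with $p_{n,0}=p_n$ and $p_{n,t}$ generic for $t\neq 0$ (possible since $\Sigma_{\Gamma_n}$ is irreducible, or by passing to the component containing $p_n$), we get $E(p_{n,t})\le R'(n)$ for a polynomial $R'$ and all $t\neq 0$, hence $\ol{E}(p_n)\le R'(n)$, i.e.\ $(p_n)\in\ol{\vp}_e$. The \emph{in other words} clause is then immediate: the sequence of algebraic varieties $(\Sigma_R^{d,v})$, indexed appropriately, is exactly what characterizes membership in $\ol{\vp}_e$.

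Two technical points I would make sure to address. First, that $\Sigma_R^{d,v}$ really is a (closed) algebraic variety: it is a finite union of the $\Sigma_\Gamma$'s, and each $\Sigma_\Gamma$ is built by finitely many applications of join and multiplicative join starting from copies of the fixed projective space $\BP V$ (with $V$ of dimension $v+1$, say), and both operations, being defined as Zariski closures of images of irreducible varieties under morphisms, produce closed irreducible varieties; a finite union of closed sets is closed. Second, the bookkeeping on sizes: I would check carefully that collapsing $+$-joined input pairs, and the alternating multiplicative-join / join reduction, each change the node count only by controlled (polynomial, in fact linear) amounts, so that the passage $\Gamma\leadsto\Sigma_\Gamma\leadsto$ (generic point of $\Sigma_\Gamma$) $\leadsto$ (new tree circuit) does not blow up size super-polynomially.

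\textbf{Main obstacle.} I expect the crux to be the reverse direction, specifically the claim that a generic point of $\Sigma_\Gamma$ has expression size controlled by the size of $\Gamma$. The forward direction is essentially just "closed sets contain limits" plus a finiteness-of-combinatorial-types pigeonhole, which is routine. But going back from the variety to a circuit requires knowing that the join/multiplicative-join operations are not merely necessary but \emph{sufficient} to recapture small circuits --- i.e.\ that the generic fiber of the construction $\Gamma\mapsto\Sigma_\Gamma$ is parametrized by points each of which is cheaply computable. Getting the quantifiers right here (which points of $\Sigma_\Gamma$, with what size bound, and making sure the "generic" locus is nonempty and that one can deform \emph{to} it from $p_n$) is where the real content lies; everything else is the finiteness and closedness bookkeeping sketched above.
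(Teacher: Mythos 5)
Your proposal is correct and follows essentially the same route as the paper, which leaves the proof implicit (``the above discussion implies'') after constructing $\Sigma_\Gamma$ and $\Sigma_R^{d,v}$: the forward direction is closedness of the finite union $\Sigma_{R(n)}^{d(n),v(n)}$ applied to the approximating curve, and the reverse direction is that a generic point of each irreducible $\Sigma_{\Gamma}$ is an honest iterated sum/product with expression size controlled by the size of $\Gamma$ plus the $v$-per-input-node overhead the paper notes. You have correctly identified and filled in the quantifier-checking that the paper omits, so no further changes are needed.
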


\begin{remark}  One has to use the class $\ol{\vp}_e$ instead of $\vp_e$ because when taking
joins one must include limits. It is not necessary to include limits when taking
multiplicative joins.
\end{remark}

\begin{corollary}A sequence $(p_n)\in A_{d(n),v(n)}$ is in $\ol{\vp}_e$ if
either $d$ or $v$ is constant. A generic sequence in $A_{d(n),v(n)}$ is
not in $\ol{\vp}_e$ if both $d,v$ grow at least linearly with respect to $n$.
\end{corollary}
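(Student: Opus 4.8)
The plan is to treat the two assertions separately. For the first, a direct construction yields more: if either $d$ or $v$ is bounded then in fact $(p_n)\in\vp_e$. Say $v(n)\equiv c$; then $p_n\in A_{d(n),c}$, and in its monomial expansion $p_n$ has at most $\binom{d(n)+c}{c}=O(d(n)^{c})$ monomials, each of the form $\lambda\, x_1^{a_1}\cdots x_c^{a_c}$ with $a_1+\cdots+a_c\le d(n)$, which a tree circuit evaluates from the inputs in $O(d(n))$ operations; combining the monomials costs one fewer addition. Hence $E(p_n)=O(d(n)^{c+1})$, a polynomial in $n$, so $(p_n)\in\vp_e\subset\ol{\vp}_e$. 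The case $d(n)\equiv c$ is symmetric: there are now $O(v(n)^{c})$ monomials, each of degree $\le c$ hence of cost $O(1)$, giving $E(p_n)=O(v(n)^{c+1})$.

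For the second assertion the key step is to bound $\tdim\Sigma^{d,v}_R$ and invoke Theorem \ref{olvpechar}. By construction $\Sigma^{d,v}_R$ is a finite union of varieties $\Sigma_\G$, each built from a tree circuit $\G$ with at most $R$ internal (in-degree $2$) nodes by iterating joins and multiplicative joins of copies of $\BP V$, where $\tdim V=v+1$. Passing to affine cones: both a join and a multiplicative join of $\hat X_1\hd\hat X_r$ are Zariski closures of images of $\hat X_1\times\cdots\times\hat X_r$, so have dimension at most $\tdim\hat X_1+\cdots+\tdim\hat X_r$ (this crude upper bound is immediate, and agrees with the dimension count for $MJ$ given above). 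An induction on the structure of $\G$ then shows that the cone over $\Sigma_\G$ has dimension at most $(v+1)$ times the number of leaves of $\G$; as a full binary tree with at most $R$ internal nodes has at most $R+1$ leaves, $\tdim\Sigma^{d,v}_R\le(R+1)(v+1)$. Thus if $(p_n)\in\ol{\vp}_e$ then by Theorem \ref{olvpechar} there is a polynomial $R$ with $p_n\in\Sigma^{d(n),v(n)}_{R(n)}$, a subvariety of $A_{d(n),v(n)}$ of dimension at most $(R(n)+1)(v(n)+1)$, hence polynomially bounded in $n$.

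On the other hand $\tdim A_{d(n),v(n)}=\binom{d(n)+v(n)}{v(n)}\ge\binom{2\mu(n)}{\mu(n)}\ge 2^{\mu(n)}$, where $\mu(n)=\tmin(d(n),v(n))$, using $\binom{2m}{m}\ge 2^m$ and monotonicity of binomial coefficients. If both $d$ and $v$ grow at least linearly then $\mu(n)\ge\kappa n$ for some $\kappa>0$ and all large $n$, so $\tdim A_{d(n),v(n)}$ outgrows every polynomial. Hence for each fixed polynomial $R$ one has $\Sigma^{d(n),v(n)}_{R(n)}\subsetneq A_{d(n),v(n)}$ for all large $n$; and since $\Sigma^{d,v}_a\subseteq\Sigma^{d,v}_b$ for $a\le b$ and there is a countable cofinal family of polynomial bounds $R$, the set of sequences lying in $\ol{\vp}_e$ is contained in a countable union of ``thin'' sets, each of which forces some fixed term $p_{n_R}$ into a proper closed subvariety of $A_{d(n_R),v(n_R)}$. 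A sequence whose $n$-th term avoids all these (countably many, proper) subvarieties --- that is, a \emph{very general} sequence --- is therefore not in $\ol{\vp}_e$.

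I expect the actual content to be just the dimension comparison above; the main nuisance, which must be stated carefully, is the last point: membership in $\ol{\vp}_e$ quantifies existentially over polynomials $R$, so ``generic'' has to be read as ``very general'' (outside a countable union of proper subvarieties), which over $\BC$ is a harmless reading.
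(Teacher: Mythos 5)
Your proof is correct and follows essentially the same route as the paper, whose entire proof is the single inequality $\tdim \Sigma^{d,v}_R\leq (v+1)(R+1)$ --- exactly the dimension bound you derive and compare against $\tdim A_{d,v}=\binom{d+v}{v}$ for the second assertion. Your explicit monomial-expansion argument for the first assertion (showing membership already in $\vp_e$) and your careful reading of ``generic'' as ``very general'' over a countable cofinal family of polynomial bounds $R$ supply details the paper's one-line proof leaves implicit; both additions are needed and handled correctly.
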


\begin{proof} $\tdim \Sigma^{d,v}_R\leq (v+1)(R+1)$.\end{proof}

\subsection{Towards a geometric understanding of $\vp$}
Recall that  the
determinant  has the property that for each $n$ there is a subspace $\fb_n\subset \BC^{n^2}$, such that $\tdet_n\mid_{\fb_n}\in \vp_e$ and moreover $G(\tdet_n)\cdot \fb_n=\BC^{n^2}$.
This perspective motivates the following definitions.

Define  $\vp^{prim}$  to be the set of sequences   $p_n\in A_{d(n),v(n)}$, where for
each $n$, there exists a linear subspace $\Sigma_n\subset \BC^{v(n)}$, such that the sequence $(p_n)|_{\Sigma_n}$ lies in $\vp_e$, and
letting $G(n)$ denote the subgroup of $GL_{v(n)}$ preserving $(p_n)$, ask moreover that $G(n)\cdot \Sigma_n=\BC^{v(n)}$.
Clearly $\vp^{prim}\subset \vp$  as the action of $G(n)$ is cheap.
 $\vp^{prim}$  is modeled on $(\tdet_n)$ where $\Sigma_n$ is the upper-triangular matrices.
Define $\vp_{hs}$ to be  set of sequences $(p_n)$ such that there exists another
sequence $(r_n)$ with $(r_n)\in \vp_e$, a polynomial $q(n)$, and sequences $(p_{n,j})$, $j=1\hd q(n)$ such that
$(p_{n,j})\in \vp^{prim}$ and $p(n)=r_n(p_{n,1}\hd p_{n,q(n)})$.
Then $\vp_{hs}\subseteq \vp$.

\begin{question}  What is the gap, if any,  between $\vp$ and $\vp_{hs}$?
\end{question}

\bibliographystyle{amsplain}
 
\bibliography{Lmatrix}

\def\cdprime{$''$} \def\cprime{$'$} \def\cprime{$'$} \def\cprime{$'$}
  \def\Dbar{\leavevmode\lower.6ex\hbox to 0pt{\hskip-.23ex \accent"16\hss}D}
  \def\cprime{$'$} \def\cprime{$'$} \def\cdprime{$''$}
  \def\Dbar{\leavevmode\lower.6ex\hbox to 0pt{\hskip-.23ex \accent"16\hss}D}
  \def\cprime{$'$} \def\cprime{$'$} \def\cprime{$'$} \def\cprime{$'$}
  \def\Dbar{\leavevmode\lower.6ex\hbox to 0pt{\hskip-.23ex \accent"16\hss}D}
  \def\cprime{$'$} \def\cprime{$'$}
\providecommand{\bysame}{\leavevmode\hbox to3em{\hrulefill}\thinspace}
\providecommand{\MR}{\relax\ifhmode\unskip\space\fi MR }
% \MRhref is called by the amsart/book/proc definition of \MR.
\providecommand{\MRhref}[2]{%
  \href{http://www.ams.org/mathscinet-getitem?mr=#1}{#2}
}
\providecommand{\href}[2]{#2}
\begin{thebibliography}{10}

\bibitem{MR1479636}
Lenore Blum, Felipe Cucker, Michael Shub, and Steve Smale, \emph{Complexity and
  real computation}, Springer-Verlag, New York, 1998, With a foreword by
  Richard M. Karp. \MR{MR1479636 (99a:68070)}

\bibitem{BLMW}
P~Buergisser, J.M. Landsberg, L~Manivel, and J.~Weyman, \emph{An overview of
  mathematical issues arising in the geometric complexity theory approach to
  $\vp\neq\vnp$}, preprint, arXiv:0907.2850.

\bibitem{buer:00-3}
P.~B{\"u}rgisser, \emph{Completeness and reduction in algebraic complexity
  theory}, {A}lgorithms and {C}omputation in {M}athematics, vol.~7, Springer
  Verlag, 2000.

\bibitem{BCS}
Peter B{\"u}rgisser, Michael Clausen, and M.~Amin Shokrollahi, \emph{Algebraic
  complexity theory}, Grundlehren der Mathematischen Wissenschaften
  [Fundamental Principles of Mathematical Sciences], vol. 315, Springer-Verlag,
  Berlin, 1997, With the collaboration of Thomas Lickteig. \MR{99c:68002}

\bibitem{PBkron}
Peter Bürgisser, Matthias Christandl, and Christian Ikenmeyer,
  \emph{Nonvanishing of kronecker coefficients for rectangular shapes},
  preprint, arXiv:0910.4512 (2009).

\bibitem{MR1032157}
Jin-Yi Cai, \emph{A note on the determinant and permanent problem}, Inform. and
  Comput. \textbf{84} (1990), no.~1, 119--127. \MR{MR1032157 (91d:68028)}

\bibitem{MR2305569}
Jin-Yi Cai and Vinay Choudhary, \emph{Some results on matchgates and
  holographic algorithms}, Automata, languages and programming. {P}art {I},
  Lecture Notes in Comput. Sci., vol. 4051, Springer, Berlin, 2006,
  pp.~703--714. \MR{MR2305569 (2007m:68285)}

\bibitem{MR2277247}
\bysame, \emph{Valiant's holant theorem and matchgate tensors}, Theory and
  applications of models of computation, Lecture Notes in Comput. Sci., vol.
  3959, Springer, Berlin, 2006, pp.~248--261. \MR{MR2277247}

\bibitem{MR2354219}
\bysame, \emph{Valiant's {H}olant theorem and matchgate tensors}, Theoret.
  Comput. Sci. \textbf{384} (2007), no.~1, 22--32. \MR{MR2354219}

\bibitem{MR2402465}
Jin-Yi Cai and Pinyan Lu, \emph{Holographic algorithms: from art to science},
  S{TOC}'07---{P}roceedings of the 39th {A}nnual {ACM} {S}ymposium on {T}heory
  of {C}omputing, ACM, New York, 2007, pp.~401--410. \MR{MR2402465}

\bibitem{MR2424719}
\bysame, \emph{Holographic algorithms: the power of dimensionality resolved},
  Automata, languages and programming, Lecture Notes in Comput. Sci., vol.
  4596, Springer, Berlin, 2007, pp.~631--642. \MR{MR2424719}

\bibitem{MR2362482}
\bysame, \emph{On symmetric signatures in holographic algorithms}, S{TACS}
  2007, Lecture Notes in Comput. Sci., vol. 4393, Springer, Berlin, 2007,
  pp.~429--440. \MR{MR2362482 (2009b:68059)}

\bibitem{MR2417594}
\bysame, \emph{Basis collapse in holographic algorithms}, Comput. Complexity
  \textbf{17} (2008), no.~2, 254--281. \MR{MR2417594}

\bibitem{MR1636473}
Claude Chevalley, \emph{The algebraic theory of spinors and {C}lifford
  algebras}, Springer-Verlag, Berlin, 1997, Collected works. Vol. 2, Edited and
  with a foreword by Pierre Cartier and Catherine Chevalley, With a postface by
  J.-P. Bourguignon. \MR{MR1636473 (99f:01028)}

\bibitem{MR1412753}
M.~Purifica{\c{c}}{\~a}o Coelho, \emph{On linear preservers of immanants},
  Linear Algebra Appl. \textbf{247} (1996), 265--271. \MR{MR1412753
  (97f:15011)}

\bibitem{MR1275631}
M.~Ant{\'o}nia Duffner, \emph{Linear transformations that preserve immanants},
  Linear Algebra Appl. \textbf{197/198} (1994), 567--588, Second Conference of
  the International Linear Algebra Society (ILAS) (Lisbon, 1992). \MR{MR1275631
  (95c:15012)}

\bibitem{MR1045637}
F.~Reese Harvey, \emph{Spinors and calibrations}, Perspectives in Mathematics,
  vol.~9, Academic Press Inc., Boston, MA, 1990. \MR{MR1045637 (91e:53056)}

\bibitem{IvL}
Thomas~A. Ivey and J.~M. Landsberg, \emph{Cartan for beginners: differential
  geometry via moving frames and exterior differential systems}, Graduate
  Studies in Mathematics, vol.~61, American Mathematical Society, Providence,
  RI, 2003. \MR{2 003 610}

\bibitem{MR0253689}
P.~W. Kasteleyn, \emph{Graph theory and crystal physics}, Graph {T}heory and
  {T}heoretical {P}hysics, Academic Press, London, 1967, pp.~43--110.
  \MR{MR0253689 (40 \#6903)}

\bibitem{LMsel}
J.~M. Landsberg and Laurent Manivel, \emph{Construction and classification of
  complex simple {L}ie algebras via projective geometry}, Selecta Math. (N.S.)
  \textbf{8} (2002), no.~1, 137--159. \MR{MR1890196 (2002m:17006)}

\bibitem{LM0}
\bysame, \emph{On the projective geometry of rational homogeneous varieties},
  Comment. Math. Helv. \textbf{78} (2003), no.~1, 65--100. \MR{2004a:14050}

\bibitem{LMRdet}
J.M. Landsberg, Laurent Manivel, and Ressayre Nickolas, \emph{Dual varieties
  and the gct program}, preprint (2010).

\bibitem{LMNholo}
J.M. Landsberg, J.~Morton, and S.~Norine, \emph{Holographic algorithms without
  matchgates}, preprint arXiv:0904.0471.

\bibitem{MR2213154}
Dudley~E. Littlewood, \emph{The theory of group characters and matrix
  representations of groups}, AMS Chelsea Publishing, Providence, RI, 2006,
  Reprint of the second (1950) edition. \MR{MR2213154 (2006m:20013)}

\bibitem{mapo:04}
G.~Malod and N.~Portier, \emph{Characterizing {V}aliant's algebraic complexity
  classes}, Journal of Complexity \textbf{24} (2008), 16--38.

\bibitem{MR2126826}
Thierry Mignon and Nicolas Ressayre, \emph{A quadratic bound for the
  determinant and permanent problem}, Int. Math. Res. Not. (2004), no.~79,
  4241--4253. \MR{MR2126826 (2006b:15015)}

\bibitem{MS8}
Ketan~D. Mulmuley, \emph{Geometric complexity theory: On canonical bases for
  the nonstandard quantum groups}, preprint.

\bibitem{MS6}
\bysame, \emph{Geometric complexity theory {VI}: the flip via saturated and
  positive integer programming in representation theory and algebraic
  geometry,}, Technical Report TR-2007-04, computer science department, The
  University of Chicago, May, 2007.

\bibitem{MS7}
\bysame, \emph{Geometric complexity theory {VII}: Nonstandard quantum group for
  the plethysm problem}, preprint.

\bibitem{MS5}
Ketan~D. Mulmuley and H.~Narayaran, \emph{Geometric complexity theory {V}: On
  deciding nonvanishing of a generalized {L}ittlewood-{R}ichardson
  coefficient}, Technical Report TR-2007-05, computer science department, The
  University of Chicago, May, 2007.

\bibitem{MS3}
Ketan~D. Mulmuley and Milind Sohoni, \emph{Geometric complexity theory {III}:
  on deciding positivity of {L}ittlewood-{R}ichardson coefficients}, preprint
  cs.ArXiv preprint cs.CC/0501076.

\bibitem{MS4}
\bysame, \emph{Geometric complexity theory {IV}: quantum group for the
  {K}ronecker problem}, preprint available at UC cs dept. homepage.

\bibitem{MS1}
\bysame, \emph{Geometric complexity theory. {I}. {A}n approach to the {P} vs.\
  {NP} and related problems}, SIAM J. Comput. \textbf{31} (2001), no.~2,
  496--526 (electronic). \MR{MR1861288 (2003a:68047)}

\bibitem{MS2}
\bysame, \emph{Geometric complexity theory. {II}. {T}owards explicit
  obstructions for embeddings among class varieties}, SIAM J. Comput.
  \textbf{38} (2008), no.~3, 1175--1206. \MR{MR2421083}

\bibitem{MR0136398}
H.~N.~V. Temperley and Michael~E. Fisher, \emph{Dimer problem in statistical
  mechanics---an exact result}, Philos. Mag. (8) \textbf{6} (1961), 1061--1063.
  \MR{MR0136398 (24 \#B2436)}

\bibitem{MR564634}
L.~G. Valiant, \emph{Completeness classes in algebra}, Conference {R}ecord of
  the {E}leventh {A}nnual {ACM} {S}ymposium on {T}heory of {C}omputing
  ({A}tlanta, {G}a., 1979), ACM, New York, 1979, pp.~249--261. \MR{MR564634
  (83e:68046)}

\bibitem{vali:79-3}
Leslie~G. Valiant, \emph{Completeness classes in algebra}, Proc.~11th ACM STOC,
  1979, pp.~249--261.

\bibitem{MR2120307}
Leslie~G. Valiant, \emph{Quantum computers that can be simulated classically in
  polynomial time}, Proceedings of the {T}hirty-{T}hird {A}nnual {ACM}
  {S}ymposium on {T}heory of {C}omputing (New York), ACM, 2001, pp.~114--123
  (electronic). \MR{MR2120307}

\bibitem{MR1932906}
\bysame, \emph{Expressiveness of matchgates}, Theoret. Comput. Sci.
  \textbf{289} (2002), no.~1, 457--471. \MR{MR1932906 (2003i:68033)}

\bibitem{ValiantSimulatingQCiPT}
\bysame, \emph{Quantum circuits that can be simulated classically in polynomial
  time}, SIAM J. Comput. \textbf{31} (2002), no.~4, 1229--1254.

\bibitem{ValiantFOCS2004}
\bysame, \emph{Holographic algorithms (extended abstract)}, Proceedings of the
  45th annual Symposium on Foundations of Computer Science (2004), 306--315.

\bibitem{MR2184617}
\bysame, \emph{Holographic circuits}, Automata, languages and programming,
  Lecture Notes in Comput. Sci., vol. 3580, Springer, Berlin, 2005, pp.~1--15.
  \MR{MR2184617 (2006g:68079)}

\bibitem{Valiantaccident}
Leslie~G. Valiant, \emph{Accidental algorithms}, Proceedings of the 47th annual
  Symposium on Foundations of Computer Science (2006), 509--517.

\bibitem{MR2386281}
Leslie~G. Valiant, \emph{Holographic algorithms}, SIAM J. Comput. \textbf{37}
  (2008), no.~5, 1565--1594. \MR{MR2386281}

\bibitem{vali:82}
L.G. Valiant, \emph{Reducibility by algebraic projections}, Logic and
  Algorithmic: an International Symposium held in honor of Ernst Speck\-er,
  vol.~30, Monogr.\ {N}o.~30 de l'{E}nseign.\ Math., 1982, pp.~365--380.

\bibitem{MR910987}
Joachim von~zur Gathen, \emph{Permanent and determinant}, Linear Algebra Appl.
  \textbf{96} (1987), 87--100. \MR{MR910987 (89a:15005)}

\bibitem{MR2334207}
Avi Wigderson, \emph{P, {NP} and mathematics---a computational complexity
  perspective}, International {C}ongress of {M}athematicians. {V}ol. {I}, Eur.
  Math. Soc., Z\"urich, 2007, pp.~665--712. \MR{MR2334207 (2008h:68043)}

\bibitem{keye}
Ke~Ye, \emph{The stabilizer of immanants}, preprint.

\end{thebibliography}

\end{document}